\def \a {\alpha}
\def \b {\beta}
\def \p {\partial}
\def \l {\lambda}
\def \tf {\tilde{f}}
\def \tui {\tilde{u}_{N,0}}
\def \tuN {\tilde{u}_N}
\def \sech {\mathrm{sech}}
\def \tanh {\mathrm{tanh}}
\def \R {\mathbb{R}}
\def \CRN {\mathcal{R}_N}
\def \N {\mathbb{N}}
\begin{document}
\renewcommand{\theequation}{\arabic{section}.\arabic{equation}}
\fancyhf{}

\fancyhead[LO]{Time-dependent Hermite-Galerkin spectral method} \cfoot{\thepage}

\title{Time-dependent Hermite-Galerkin spectral method and its applications}

\author{Xue Luo}
\author{Shing-Tung Yau}
\author{Stephen S.-T. Yau}

\address{School of Mathematics and System Sciences, Beihang University, Beijing, P. R. China 100191. email: xluo@buaa.edu.cn}
\address{Department of Mathematics, Harvard University, Cambridge, MA 02138, US. email: yau@math.harvard.edu}
\address{Department of Mathematical Sciences, Tsinghua
University, Beijing, P. R. China, 100084. email: yau@uic.edu}

\newtheorem{definition}{Definition}
\renewcommand{\thedefinition}{\arabic{section}.\arabic{definition}}
\newtheorem{proposition}{Proposition}
\renewcommand{\theproposition}{\arabic{section}.\arabic{proposition}}
\newtheorem{theorem}{Theorem}
\renewcommand{\thetheorem}{\arabic{section}.\arabic{theorem}}
\newtheorem{lemma}{Lemma}
\renewcommand{\thelemma}{\arabic{section}.\arabic{lemma}}
\newtheorem{corollary}{Corollary}
\renewcommand{\thecorollary}{\arabic{section}.\arabic{corollary}}
\newtheorem{remark}{Remark}
\renewcommand{\theremark}{\arabic{section}.\arabic{remark}}
\renewcommand{\thefigure}{\arabic{section}.\arabic{figure}}

\begin{abstract}
	A time-dependent Hermite-Galerkin spectral method (THGSM) is investigated in this paper for the nonlinear convection-diffusion equations in the unbounded domains. The time-dependent scaling factor and translating factor are introduced in the definition of the generalized Hermite functions (GHF). As a consequence, the THGSM based on these GHF has many advantages, not only in theorethical proofs, but also in numerical implementations. The stability and spectral convergence of our proposed method have been established in this paper. The Korteweg-de Vries-Burgers (KdVB) equation and its special cases, including the heat equation and the Burgers' equation, as the examples, have been numerically solved by our method. The numerical results are presented, and it surpasses the existing methods in accuracy. Our theoretical proof of the spectral convergence has been supported by the numerical results. 
\end{abstract}

\subjclass{65N35, 65N22, 65M70}

\keywords{Hermite-Galerkin spectral method; time-dependent parameters; nonlinear convection-diffusion equations}
\maketitle

\section{Introduction}

\setcounter{equation}{0}

Many scientific and engineering problems are naturally modelled in the unbounded domains. One way to numerically solve the problems is to restrict the model equation in some bounded domain and artificially impose some boundary condition cleverly. Whereas this introduces errors even before the implementation of the numerical scheme. Another more suitable way is to use the spectral approaches employing orthogonal systems in unbounded domain, such as using Laguerre polynomials for the problems in semi-bounded or exterior domains \cite{GS,KS}, and using Hermite polynomials for those in the whole space \cite{A,FGT,FK,GX,MST}. 

Although the freedom from artificial boundary condition is very attractive, the Hermite spectral method (HSM) is only widely studied in the recent decade, due to its poor resolution without the appropriate scaling factor. Gottlieb and Orszag  \cite{GO} claim that to resolve $M$ wavelength of $\sin{x}$, it requires nearly $M^2$ Hermite polynomials. The Hermite functions, defined as $\displaystyle\left\{H_n(x)e^{-x^2}\right\}_{n=0}^\infty$, have the same deficiency as the polynomials $\displaystyle\left\{H_n(x)\right\}_{n=0}^\infty$. The importance of the scaling factor has been discussed in \cite{T,SH}. It has been shown in \cite{B} that the scaling factor should be selected according to the truncated modes $N$ and the asymptotical
behavior of the function $f(x)$, as $|x|\rightarrow\pm\infty$. The optimal scaling factor is still an open problem, even in the case that $f(x)$ is given explicitly, to say nothing of the exact solution to a differential equation, which is in general unknown a-priorily. Recently, during the study of using the HSM to solve the nonlinear filtering problems, Yau and the author gave a practical strategy in \cite{LY} to pick the appropriate scaling factor and the corresponding truncated mode for at least the most commonly used types of functions, i.e. the Gaussian type and the super-Gaussian type. Thanks to this guideline, the Hermite-Galerkin spectral method (HGSM) becomes implementable.

In the literature of solving partial differential equations in unbounded domains using HSM, nearly all the schemes are not direct Galerkin ones. As far as the author knows, there are at least two possible reasons:
\begin{enumerate}
	\item The lack of the practical guidelines of choosing appropriate scaling factor before \cite{LY} makes the direct Galerkin method infeasible.  
	\item When directly applying traditional definition of Hermite functions (i.e., $\{H_n(x)e^{-x^2}\}_{n=0}^\infty$) to second-order differential equations, it is found in \cite{G} that the stiff matrix is of nonsymmetric bilinear form, which has no property of coercity. In other word, the stability can not be established by using the classical energy method.
\end{enumerate}

To overcome the obstacles above, Funaro and Kavian \cite{FK} first consider the use of the Hermite polynomials to approximate the solutions of some diffusion evolution equations in unbounded domains. The variable transformation technique is introduced to get better resolution. Later, Guo \cite{G}, Guo and Xu \cite{GX} developed the Hermite polynomial spectral and pseudo-spectral methods, where the transformation $U=e^{-x^2}V$ is used, and then $V$ is approximated by the Hermite polynomials. Ma, Sun and Tang \cite{MST}, Ma and Zhao \cite{MZ} introduced a time-dependent parameter to stabilize the scheme, which is based on the traditional defined Hermite functions. However, no discussion was given on how to choose such parameter for the particular problems. 

The aim of this paper is to develop a time-dependent Hermite-Galerkin spectral method (THGSM) to approximate the solution to the nonlinear convection-diffusion equations with high accuracy. The time-dependence is reflected in the definition of the generalized Hermite functions (GHF), where the scaling factor and the translating factor are the functions of time. The choice of the time-dependent scaling factor can follow the guidelines in \cite{LY}, while the time-dependent translating factor mainly deals with the time-shifting of the solution, see examples in section 4.3. The advantages of our THGSM are the following:
\begin{enumerate}
	\item It is a direct Galerkin scheme, which can be implemented straight forward. And the resulting stiffness matrix of the second-order differential equations are of nice properties. For example, it is tri-diagonal, symmetric and diagonally dominant in the linear case, i.e. $g(u)\equiv0$ in \eqref{1D KdVB with source}; it is symmetric for the Burgers' equation, i.e. $g(u)=\frac u2$ in \eqref{1D KdVB with source}. 
	\item The proofs of stability and spectral convergence are greatly simplified, thanks to the definition of GHF. They are analyzed in the $L^2$ space, instead of the weighted one as in \cite{MST}.
	\item From the numerical simulations in section 4, our scheme outperforms nearly all the existing methods in accuracy.
\end{enumerate}

An outline of the paper is as follows. In section 2, we give the definition of GHF and its properties. For the readers' convenience, we include the proof of the error estimate of the orthogonal projection. Our TGHSM to solve the nonlinear convection-diffusion equations is introduced in section 3. The stability analysis in the sense of \cite{G} and the spectral convergence are shown there. Section 4 is devoted to the numerical simulations, where we compared the numerical results with those obtained by other methods in some benchmark equations.  

\section{Generalized Hermite functions (GHF)}

\setcounter{equation}{0}

We introduce the GHF and derive some properties which are inherited from the physical Hermite polynomials. For the sake of completeness, we give the proof of the convergence rate of the orthogonal approximation.

\subsection{Notations and Preliminaries}

Let $L^2(\mathbb{R})$ be the Lebesgue space, which equips with the
norm $||\cdot||=(\int_{\mathbb{R}}|\cdot|^2 dx)^{\frac12}$ and the
scalar product $\langle\cdot,\cdot\rangle$.

Let $\mathcal{H}_n(x)$  be the physical Hermite polynomials given by
$\mathcal{H}_n(x)=(-1)^ne^{x^2}\partial_x^ne^{-x^2}$, $n\geq0$. More
practically, the three-term recurrence
\begin{align}\label{recurrence}
    \mathcal{H}_0\equiv1;\quad \mathcal{H}_1(x)=2x;\quad\textup{and}\quad
    \mathcal{H}_{n+1}(x)=2x\mathcal{H}_n(x)-2n\mathcal{H}_{n-1}(x).
\end{align}
is more handy in implementation. One of the well-known and useful
fact of Hermite polynomials is that they are mutually orthogonal
with the weight $w(x)=e^{-x^2}$. We define the time-dependent GHF as
\begin{align}\label{new hermite}
    H_n^{\alpha,\beta}(x,t)=\left(\frac{\alpha(t)}{2^nn!\sqrt\pi}\right)^{\frac12}\mathcal{H}_n[\alpha(t)(x-\beta(t))]e^{-\frac12\alpha^2(t)[x-\beta(t)]^2},
\end{align}
for $n\geq0$, where $\alpha(t)>0$, $\b(t)$, for $t\in[0,T]$, are functions of time. For the conciseness of notation, let us denote $d(n)=\sqrt{\frac n2}$. And if no confusion will arise, in the sequel we omit the $t$ in $\a(t)$ and $\b(t)$. It is readily to derive the following properties for
the GHF (\ref{new hermite}):
\begin{enumerate}
    \item [$\blacksquare$] \label{orthogonal of H_n^alpha} At each time $t>0$, $\{H_n^{\alpha,\beta}(\cdot,t)\}_{n\in\mathbb{Z^+}}$ form the orthogonal basis of $L^2(\mathbb{R})$, i.e.
        \begin{align}\label{orthogonal}
            \int_{\mathbb{R}}H_n^{\alpha,\beta}(x,t)H_m^{\alpha,\beta}(x,t)dx=\delta_{nm},
        \end{align}
    where $\delta_{nm}$ is the Kronecker function.
    \item [$\blacksquare$]\label{eigenvalue} $H_n^{\alpha,\beta}(\cdot,t)$ is the $n^{\textup{th}}$
    eigenfunction of the following Strum-Liouville problem
        \begin{align*}%\label{S-L}
            \mathcal{L}^{\a,\b}u(x,t)=\lambda_n(t)u(x,t),
        \end{align*}
where 
\[
	\mathcal{L}^{\a,\b}(\circ)=-e^{\frac12\alpha^2(x-\beta)^2}\partial_x(e^{-\alpha^2(x-\beta)^2}\partial_x(e^{\frac12\alpha^2(x-\beta)^2}\circ))
\]
with the corresponding eigenvalue $\lambda_n(t)=2\alpha^2(t)n$.
    \item [$\blacksquare$] By convention, $H_n^{\alpha,\beta}\equiv0$, for $n<0$. For
    $n\geq0$, the three-term recurrence is inherited from the
    Hermite polynomials:
    \begin{align}\label{recurrence for RT hermite function}\notag
       xH_n^{\alpha,\beta}(x,t) =&
                \frac{d(n+1)}{\alpha}H_{n+1}^{\alpha,\beta}(x,t)
                +\beta H_n^{\alpha,\beta}(x,t)
                +\frac{d(n)}{\alpha}H_{n-1}^{\alpha,\beta}(x,t);\\
        \textup{or}\quad
      \a(x-\b)H_n^{\a,\b}(x,t)=&d(n+1)H_{n+1}^{\a,\b}(x,t)+d(n)H_{n-1}^{\a,\b}(x,t).
    \end{align}
    \item [$\blacksquare$] The derivative of $H_n^{\alpha,\beta}(x,t)$ with respect to $x$
    and $t$
    \begin{align}\label{derivative_x}\notag
        \partial_xH_n^{\alpha,\beta}(x,t)
        =&-\frac12\sqrt{\lambda_{n+1}}H_{n+1}^{\alpha,\beta}(x,t)
            +\frac12\sqrt{\lambda_n}H_{n-1}^{\alpha,\beta}(x,t)\\
       =&-d(n+1)\a H_{n+1}^{\a,\b}(x,t)+d(n)\a H_{n-1}^{\a,\b}(x,t).
\end{align}
    \item [$\blacksquare$] The ``orthogonality" of $\left\{\partial_xH_n^{\alpha,\beta}(\cdot,t)\right\}_{n\in\mathbb{Z^+}}$
    \begin{align}\label{orthogonality of derivative}
    \int_{\mathbb{R}}\partial_xH_n^{\alpha,\beta}\partial_xH_m^{\alpha,\beta}dx =
    \left\{ \begin{aligned}
        \a^2\left[d^2(n+1)+d^2(n)\right],\quad&\textup{if}\
    m=n;\\
        -\a^2d(l+1)d(l+2),\quad& l=\min\{n,m\},\ \textup{if}
    \ |n-m|=2;\\
       0,\quad&\textup{otherwise}.
    \end{aligned} \right.
\end{align}
\end{enumerate}

\begin{lemma}[Derivative with respect to $t$]\label{lemma-derivative_t}
	\begin{align}\label{eqn-derivative_t}\notag
		\partial_tH_n^{\a,\b}(x,t)
	=&-\frac{\a'}\a d(n+1)d(n+2)H_{n+2}^{\a,\b}(x,t)+\a\b'd(n+1)H_{n+1}^{\a,\b}\\
	&-\a\b'd(n)H_{n-1}^{\a,\b}(x,t)+\frac{\a'}\a d(n)d(n-1)H_{n-2}^{\a,\b}(x,t),
	\end{align}
where $d(n)=\sqrt{\frac n2}$ and $\a,\b$ are the functions of $t$.
\end{lemma}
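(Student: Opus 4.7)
The plan is to compute $\partial_t H_n^{\a,\b}$ by the chain rule, treating $H_n^{\a,\b}$ as a function of the two time-varying parameters $\a,\b$:
\[
\partial_t H_n^{\a,\b} \;=\; \a'\,\partial_\a H_n^{\a,\b} \;+\; \b'\,\partial_\b H_n^{\a,\b}.
\]
The $\b$-derivative is essentially free. Since $H_n^{\a,\b}(x,t)$ depends on $x$ and $\b$ only through the combination $x-\b$, one has $\partial_\b H_n^{\a,\b} = -\partial_x H_n^{\a,\b}$, so invoking the already-established derivative formula \eqref{derivative_x} gives
\[
\b'\,\partial_\b H_n^{\a,\b} \;=\; \a\b'\,d(n+1)H_{n+1}^{\a,\b}-\a\b'\,d(n)H_{n-1}^{\a,\b},
\]
which accounts for the two middle terms of \eqref{eqn-derivative_t}.

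The real work is computing $\partial_\a H_n^{\a,\b}$. I would introduce the shorthand $\xi=\a(x-\b)$ and $c_n=(2^nn!\sqrt\pi)^{-1/2}$, so that $H_n^{\a,\b}=\a^{1/2}c_n\mathcal{H}_n(\xi)e^{-\xi^2/2}$. Differentiating in $\a$ (with $x,\b$ fixed) produces three terms: a $\frac{1}{2\a}H_n^{\a,\b}$ term from $\a^{1/2}$, a $y\mathcal{H}_n'(\xi)$ term from the Hermite polynomial, and a $-\a y^2\mathcal{H}_n(\xi)$ term from the Gaussian, where $y=x-\b$. Then using $\mathcal{H}_n'(\xi)=2n\mathcal{H}_{n-1}(\xi)$ together with the recurrence \eqref{recurrence} rewritten as $\xi\mathcal{H}_n=\tfrac12\mathcal{H}_{n+1}+n\mathcal{H}_{n-1}$, I would collect the bracket into $n\mathcal{H}_{n-1}(\xi)-\tfrac12\mathcal{H}_{n+1}(\xi)$, multiply again by $\xi=\a y$, and apply the recurrence a second time to expand $\xi\mathcal{H}_{n-1}$ and $\xi\mathcal{H}_{n+1}$. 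After this two-step application of the recurrence, the $\mathcal{H}_n$ contributions combine with the initial $\frac{1}{2\a}H_n^{\a,\b}$ term and cancel exactly, leaving only $\mathcal{H}_{n+2}$ and $\mathcal{H}_{n-2}$ contributions.

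It remains to reexpress the surviving $\mathcal{H}_{n\pm2}$ terms as $H_{n\pm2}^{\a,\b}$. This is a bookkeeping step: by the definition \eqref{new hermite}, the ratio of normalizing constants is $c_n/c_{n\pm2}=\frac{1}{2\sqrt{(n+1)(n+2)}}$ or $\frac{1}{2\sqrt{n(n-1)}}$ (up to the obvious assignment), which combine with the $n(n-1)$ and $\tfrac14$ coefficients produced above to give precisely $\a^{-1}d(n+1)d(n+2)$ and $\a^{-1}d(n)d(n-1)$. Multiplying by $\a'$ then yields the outer two terms of \eqref{eqn-derivative_t}.

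The only delicate step is the algebraic cancellation of the $H_n^{\a,\b}$ contribution, which requires carrying the $\tfrac{1}{2\a}$ factor through to the end and matching it against the $\mathcal{H}_n$ terms produced by the second application of the recurrence; once one is careful with the normalization constants $c_n$ and the identity $d(k)d(k\pm1)=\tfrac12\sqrt{k(k\pm1)}$, everything assembles into the stated formula.
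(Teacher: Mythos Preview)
Your proof is correct and follows the same underlying computation as the paper: differentiate, apply the three-term recurrence twice, and observe that the $H_n^{\a,\b}$ contribution cancels against the $\tfrac{1}{2\a}H_n^{\a,\b}$ term coming from $\a^{1/2}$. Your organization differs in one useful way: by writing $\partial_t=\a'\partial_\a+\b'\partial_\b$ and noting $\partial_\b=-\partial_x$, you read off the two $\b'$ terms for free from the already-established formula \eqref{derivative_x}, whereas the paper carries the combined factor $\a'(x-\b)-\a\b'$ throughout and works with the normalized recurrence \eqref{recurrence for RT hermite function} for $H_k^{\a,\b}$ rather than reverting to the raw $\mathcal{H}_k$. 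One small slip in your write-up: $c_n/c_{n+2}=2\sqrt{(n+1)(n+2)}$, not its reciprocal; your ``up to the obvious assignment'' covers this, and the final coefficients you report are correct.
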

\begin{proof}
	Through direct computations, we have
\begin{align*}
	\p_tH_n^{\a,\b}(x,t)\overset{\eqref{new hermite}}=&\frac{\a'}{2\a}H_n^{\a,\b}(x,t)
		+\sqrt{2n}H_{n-1}^{\a,\b}(x,t)[\a'(x-\b)-\a\b']\\
		&-[\a'(x-\b)-\a\b']\a(x-\b)H_n^{\a,\b}(x,t)\\
	\overset{\eqref{recurrence for RT hermite function}}
	=&\frac{\a'}{2\a}H_n^{\a,\b}(x,t)+\sqrt{n(n-1)}\frac{\a'}\a H_{n-2}^{\a,\b}(x,t)-\a\b'\sqrt{2n}H_{n-1}^{\a,\b}(x,t)+n\frac{\a'}\a H_n^{\a,\b}(x,t)\\
 &-\frac{\a'}{2\a}\sqrt{n(n-1)}H_{n-2}^{\a,\b}(x,t)+\a\b'\sqrt{\frac n2}H_{n-1}^{\a,\b}(x,t)-\frac{\a'}{2\a}(2n+1)H_n^{\a,\b}(x,t)\\
&+\a\b'\sqrt{\frac{n+1}2}H_{n+1}^{\a,\b}(x,t)-\frac{\a'}{2\a}\sqrt{(n+1)(n+2)}H_{n+2}^{\a,\b}(x,t)\\
	=&\frac{\a'}{2\a}\sqrt{n(n-1)}H_{n-2}^{\a,\b}(x,t)-\a\b'\sqrt{\frac n2}H_{n-1}^{\a,\b}(x,t)\\
	&+\a\b'\sqrt{\frac{n+1}2}H_{n+1}^{\a,\b}(x,t)-\frac{\a'}{2\a}\sqrt{(n+1)(n+2)}H_{n+2}^{\a,\b}(x,t).
\end{align*}
\end{proof}

\begin{remark}
The simple fact follows immediately from \eqref{eqn-derivative_t} and \eqref{orthogonal of H_n^alpha}. For $N\gg1$ sufficiently large, for $t\in[0,T]$, then
    \begin{align*}
        \left|\left|\partial_tH_N^{\alpha,\beta}(x,t)\right|\right|^2\lesssim
        \frac{\a'^2}{\a^2}N^2+\a'\b'N^{\frac32}+\a^2\b'^2N.
    \end{align*}
In particular, we have
\begin{equation*}%\label{eqn-L2norm of d_tH}
	\left|\left|\partial_tH_N^{\alpha,\beta}(x,t)\right|\right|^2\lesssim
        \frac{\a'^2}{\a^2}N^2.
\end{equation*}
\end{remark}

We follow the convection in the asymptotic analysis,
$a\sim b$ means that there exists some constants $C_1,C_2>0$ such
that $C_1a\leq b\leq C_2a$; $a\lesssim b$ means that there exists
some constant $C_3>0$ such that $a\leq C_3b$.

Any function $u(\cdot,t)\in L^2(\mathbb{R})$ can be written in the
form
\begin{align}\label{Hermite representation}
    u(\cdot,t)=\sum_{n=0}^\infty\hat{u}_n(t)H_n^{\alpha,\beta}(\cdot,t),\quad
    \hat{u}_n(t)=\int_{\mathbb{R}}u(x,t)H_n^{\alpha,\beta}(x,t)dx:=\left\langle u, H_n^{\a,\b}\right\rangle.
\end{align}
where $\left\{\hat{u}_n\right\}_{n=0}^\infty$ are the Fourier-Hermite
coefficients defined similarly as Fourier coefficients with
$H_n^{\alpha,\beta}(x,t)$ taking the place of harmonic oscillators.

For $N\geq0$, let
\begin{align*}%\label{RN}
    \mathcal{R}_N(t)=\textup{span}\left\{H_0^{\alpha,\beta}(x,t),\cdots,H_N^{\alpha,\beta}(x,t)\right\}.
\end{align*}
At each time $t\in[0,T]$, it is a linear subspace of
$L^2(\mathbb{R})$. 

\begin{lemma}\label{lemma-estimate of L2 norm of phi in RN}
	For any function $\varphi(x,t)\in\CRN(t)$, we have
\begin{align*}
	||\p_x\varphi||\lesssim\a N^{\frac12}||\varphi||.
\end{align*}
\end{lemma}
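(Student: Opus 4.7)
The plan is to expand $\varphi$ in the GHF basis and exploit the three-term structure of the derivative formula \eqref{derivative_x}. Since $\varphi(\cdot,t)\in\mathcal{R}_N(t)$, I can write
\begin{equation*}
\varphi(x,t)=\sum_{n=0}^{N}\hat{\varphi}_n(t)H_n^{\alpha,\beta}(x,t),
\end{equation*}
and by orthonormality $\|\varphi\|^2=\sum_{n=0}^{N}\hat{\varphi}_n^2$. Applying $\partial_x$ termwise and invoking \eqref{derivative_x}, the derivative $\partial_x\varphi$ becomes a linear combination of $H_{n\pm1}^{\alpha,\beta}$ with coefficients proportional to $\alpha$ and $d(\cdot)$.

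The key step is to regroup the resulting series by the index of the basis function, writing
\begin{equation*}
\partial_x\varphi=\alpha\sum_{k}\bigl[d(k+1)\hat{\varphi}_{k+1}-d(k)\hat{\varphi}_{k-1}\bigr]H_k^{\alpha,\beta},
\end{equation*}
where we adopt the convention $\hat{\varphi}_n=0$ for $n<0$ or $n>N$. Orthonormality of the basis then gives
\begin{equation*}
\|\partial_x\varphi\|^2=\alpha^2\sum_{k}\bigl[d(k+1)\hat{\varphi}_{k+1}-d(k)\hat{\varphi}_{k-1}\bigr]^2.
\end{equation*}

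From here the estimate is immediate: apply $(a-b)^2\leq 2(a^2+b^2)$, recall $d(n)^2=n/2$, and shift the summation indices to see that the right-hand side is bounded by $\alpha^2(N+1)\sum_{j=0}^{N}\hat{\varphi}_j^2=\alpha^2(N+1)\|\varphi\|^2$, up to an absolute constant. Taking square roots yields the desired inverse inequality $\|\partial_x\varphi\|\lesssim\alpha N^{1/2}\|\varphi\|$.

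I do not anticipate any serious obstacle: this is a standard Bernstein-type inverse estimate for a polynomial (times Gaussian) family, and the only place one must be careful is the index bookkeeping when re-indexing the two shifted sums and the handling of the boundary terms $\hat{\varphi}_{N+1},\hat{\varphi}_{N+2}$, which vanish by convention. The role of $\alpha$ comes out cleanly because each factor of $\partial_x$ in the GHF pulls out exactly one power of $\alpha$, exactly as in \eqref{derivative_x}.
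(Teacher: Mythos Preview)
Your proposal is correct and follows essentially the same approach as the paper: expand $\varphi$ in the GHF basis, apply the derivative formula \eqref{derivative_x}, and use orthonormality to read off $\|\partial_x\varphi\|^2$ as a quadratic form in the coefficients $\hat\varphi_k$ with weights of size $O(\alpha^2 N)$. The only cosmetic difference is that you regroup by the target basis index $k$ before invoking orthonormality and then apply $(a-b)^2\le 2(a^2+b^2)$, whereas the paper expands the inner product directly into diagonal and cross terms and bounds those; the two computations are algebraically equivalent.
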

\begin{proof}
	For any function $\varphi(x,t)\in\CRN(t)$, we can write it as
\begin{align*}
	\varphi(x,t)=\sum_{k=0}^N\hat{\varphi}_k(t)H_k^{\a,\b}(x,t).
\end{align*}
Thus,
\begin{align*}
	||\p_x\varphi||^2=&\langle\p_x\varphi,\p_x\varphi\rangle\\
	=&\a^2\sum_{k=0}^N\hat{\varphi}_k\left\langle-d(k+1)H_{k+1}^{\a,\b}+d(k)H_{k-1}^{\a,\b},\sum_{l=0}^N\hat{\varphi}_l
\left[-d(l+1)H_{l+1}^{\a,\b}+d(l)H_{l-1}^{\a,\b}\right]\right\rangle\\
	=&\sum_{k=0}^N\hat{\varphi}_k^2[d^2(k+1)+d^2(k)]
-2\a^2\sum_{k=2}^Nd(k-1)d(k)\hat{\varphi}_{k-2}\hat{\varphi}_k
	\lesssim\a^2N\sum_{k=0}^N\hat{\varphi}_k^2\leq\a^2N||\varphi||^2.
\end{align*}
\end{proof}

\subsection{Orthogonal projection and approximations}

Let us define the norm of the space $H_r(\R)$ as below:
\begin{definition} For any integer $r\geq0$, 
\begin{equation}\label{eqn-H_r}
	H_{r,\a}(\R):=\left\{u\in L^2(\R):\,||u||_r<\infty,
||u||_r^2:=\sum_{k=0}^\infty\l_{k+1}^r\hat{u}_k^2,\  \l_k=2\a^2k\right\}.
\end{equation}
\end{definition}

It is readily shown in \cite{XW} and \cite{XW2013} that when $\beta=0$ the norm defined in \eqref{eqn-H_r} is equivalent to the following Sobolev-like norm 
\begin{align*}
	||u||_{P,r,\a}^2=\sum_{k=0}^r\left|\left|\left(\a^4x^2+\a^2\right)^{\frac{r-k}2}\p_x^ku\right|\right|^2
\end{align*}
in the
sense that for any $u\in H_{r,\a}(\mathbb{R})$, with $r\geq0$
\begin{align*}
    ||u||_{r,\a}^2\sim\alpha||u||_{P,r,\a}^2.
\end{align*}
This equivalence can be extended trivially to where $\beta\neq0$, by mimicking the proof of Lemma 2.2-2.3 in \cite{XW} with $x$ replaced by $(x-\beta)$. 

From the definition of $||\cdot||_{P,r,\a}$, it is clear to see that the larger $r$ is, the
smaller space $H_{r,\a}$ is, and the smoother the functions are. The index $r$ can be viewed as the indicator of the regularity of the functions. 

Next we need some estimates on $||x^{\gamma}\partial_x^su(x)||^2$,
for any integer $\gamma,s\geq0$:
\begin{lemma}\label{lemma-seminorm estimate}
    For any function $u\in H_{s,\a}(\mathbb{R})$, with some integer $s\geq0$, we have
    \begin{align}\label{eqn-seminorm estimate}
        ||\partial_x^su||\lesssim||u||_{s,\a}.
    \end{align}
\end{lemma}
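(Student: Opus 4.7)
The plan is to expand $u$ in the GHF basis and iterate the first-order derivative formula \eqref{derivative_x} a total of $s$ times, tracking the size of the resulting coefficients.

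Writing $u = \sum_{k\ge 0}\hat u_k H_k^{\a,\b}$, each application of \eqref{derivative_x} sends $H_k^{\a,\b}$ to a linear combination of $H_{k+1}^{\a,\b}$ and $H_{k-1}^{\a,\b}$ with coefficients of size $\a\,d(k+1)$ and $\a\,d(k)$, respectively. Iterating this $s$ times shows that
\[
\p_x^s H_k^{\a,\b} \;=\; \sum_{\substack{|j|\le s\\ j\equiv s\,(\text{mod }2)}} c_{k,j}^{(s)}\, H_{k+j}^{\a,\b},
\]
where each coefficient $c_{k,j}^{(s)}$ is $\a^s$ times a signed product of $s$ factors of the form $d(m)=\sqrt{m/2}$ with $m\in\{k-s+1,\dots,k+s\}$. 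Hence there is a constant $C_s$ depending only on $s$ such that
\[
|c_{k,j}^{(s)}| \;\le\; C_s\,\a^s (k+1)^{s/2} \;\sim\; \l_{k+1}^{s/2},
\]
uniformly in $k\ge 0$ (using the convention $H_n^{\a,\b}\equiv0$ for $n<0$ to handle the boundary cases). A clean way to formalize the coefficient bound is by induction on $s$, with the base case $s=1$ being exactly \eqref{derivative_x}.

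Next I would reindex the sum by the output mode. Setting $m=k+j$,
\[
\p_x^s u \;=\; \sum_{m\ge 0}\,\Big(\sum_{j}\, \hat u_{m-j}\, c_{m-j,j}^{(s)}\Big)\, H_m^{\a,\b},
\]
where for each $m$ the inner sum has at most $s+1$ nonzero terms. By the orthogonality \eqref{orthogonal} and the Cauchy--Schwarz inequality applied to this finite inner sum,
\[
\|\p_x^s u\|^2 \;\le\; (s+1)\sum_{m\ge 0}\sum_{j} |\hat u_{m-j}|^2 |c_{m-j,j}^{(s)}|^2
\;\lesssim\; \sum_{j}\sum_{k\ge 0} \l_{k+1}^{s}\, |\hat u_k|^2
\;\lesssim\; \|u\|_{s,\a}^2,
\]
which is the claim.

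The main technical step is the uniform-in-$k$ bound on $c_{k,j}^{(s)}$: one must verify that each of the $s$ factors of $d(\cdot)$ picked up during the iteration scales like $\sqrt{k}$ (not like $\sqrt{k+s}$ versus $\sqrt{k-s}$ in an asymmetric way), so that the entire coefficient is comparable to $(\a\sqrt{k+1})^s$. This is purely combinatorial — an induction on $s$ makes it routine — but it is the one place in the argument where care is needed. Everything else reduces to orthogonality plus Cauchy--Schwarz on a finite set of offsets $j$.
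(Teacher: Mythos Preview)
Your argument is correct and follows essentially the same route as the paper: expand $u$ in the GHF basis, iterate \eqref{derivative_x} to write $\partial_x^s H_k^{\a,\b}$ as a finite combination $\sum_{|j|\le s} c_{k,j}^{(s)} H_{k+j}^{\a,\b}$ with each coefficient a product of $s$ factors comparable to $\sqrt{\lambda_{k+1}}$, then invoke orthogonality. The only difference is presentational: you reindex by output mode and apply Cauchy--Schwarz explicitly on the finite inner sum, whereas the paper absorbs that step into the $\lesssim$ by noting $\lambda_{n+j}\sim\lambda_{n+1}$ for $|j|\le s$ and passing directly to the bound.
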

\noindent{\bf Proof.}\quad For any integer $s\geq0$,
\begin{align*}
    \left|\left|\partial_x^su\right|\right|^2
    =&\left|\left|\sum_{n=0}^\infty\hat{u}_n\partial_x^sH_n^{\alpha,\beta}\right|\right|^2
        =\left|\left|\sum_{n=0}^\infty\hat{u}_n\sum_{k=-s}^{s}
    a_{n,k}H_{n+k}^{\alpha,\beta}\right|\right|^2,
\end{align*}
where for each $n$ fixed, $a_{n,k}$ is a product
of $s$ factors of $\sqrt{\lambda_{n+j}}$ with $-s\leq j\leq s$.
Notice that $\lambda_{n+j}\sim\lambda_{n+1}$ for $n+j\geq0$ and
$H_{n+j}^{\alpha,\beta}\equiv0$ for $n+j<0$. Hence, we have
\begin{align*}
    \left|\left|\partial_x^su\right|\right|^2\overset{\eqref{orthogonal}}\lesssim\sum_{n=0}^\infty\lambda_{n+1}^{s}\hat{u}_n^2
        =||u||_{s,\a}^2,
\end{align*}
for any integer $s\geq0$.\hfill{$\Box$}

In the spectral method, a function in $L^2(\mathbb{R})$ is
approximated by the partial sum of the first $N$ frequency modes. We
define the $L^2$-orthogonal projection
$P_{N}^{\alpha,\beta}:\,L^2(\mathbb{R})\rightarrow\mathcal{R}_N(t)$, for some $t\in[0,T]$. Given $v\in L^2(\mathbb{R})$,
\begin{align*}%\label{projection}
    \left\langle v-P_N^{\alpha,\beta}v,\phi\right\rangle=0,\quad\forall\,\phi\in\mathcal{R}_N(t).
\end{align*}
More precisely,
\begin{align*}
    P_Nv(x):=\sum_{n=0}^N\hat{v}_n(t)H_n^{\alpha,\beta}(x,t),
\end{align*}
for some $t\in[0,T]$, where $\hat{v}_n$ are the Fourier-Hermite coefficients defined in (\ref{Hermite representation}).

And the truncated error $||u-P_Nu||_{P,r,\a}$, for any integer
$r\geq0$, has been estimated readily in Theorem 2.3, \cite{GSX} for
$\alpha=1$, $\beta=0$, and in Theorem 2.1 , \cite{XW} for arbitrary
$\alpha>0$ and $\beta=0$. For arbitrary $\beta\neq0$, the estimate
is still valid. The proof is extremely similar to that of Theorem
2.1, \cite{XW}, except that we interpret the functions in the space
$H_{r,\a}$. For the sake of completeness, we include the proof here.
\begin{theorem}\label{theorem-truncation error}
    For any $u\in H_{r,\a}(\mathbb{R})$ and any integer $0\leq\mu\leq r$, we have
    \begin{align}\label{truncated error}
        \left|u-P_Nu\right|_\mu\lesssim\alpha^{\mu-r}N^{\frac{\mu-r}2}||u||_{r,\a},
    \end{align}
    where $|u|_\mu:=||\partial_x^\mu u||$ are the seminorms.
\end{theorem}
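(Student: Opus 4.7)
The plan is to reduce the seminorm $|u - P_N u|_\mu$ to the weighted series norm $\|\cdot\|_{\mu,\alpha}$ and then spectrally decay the high-frequency modes to pick up the factor $\alpha^{\mu-r} N^{(\mu-r)/2}$.

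First I would write the tail explicitly. Since $P_N u$ retains the first $N+1$ Fourier-Hermite coefficients of $u$, we have $u - P_N u = \sum_{n=N+1}^\infty \hat u_n(t) H_n^{\alpha,\beta}$. Applying Lemma~\ref{lemma-seminorm estimate} (which bounds $\|\partial_x^s v\|$ by $\|v\|_{s,\alpha}$ for any $v \in H_{s,\alpha}$) to $v = u - P_N u$ with $s = \mu$ gives
\begin{align*}
|u - P_N u|_\mu^2 \;\lesssim\; \|u - P_N u\|_{\mu,\alpha}^2 \;=\; \sum_{n=N+1}^\infty \lambda_{n+1}^\mu\, \hat u_n^2.
\end{align*}

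Next I would exploit the monotonicity of $\lambda_{n+1} = 2\alpha^2(n+1)$ to trade $\mu$ for $r$. Since $\mu - r \leq 0$ and $n+1 \geq N+2$ for every $n$ in the tail, we have $\lambda_{n+1}^{\mu - r} \leq \lambda_{N+2}^{\mu - r} \lesssim \alpha^{2(\mu-r)} N^{\mu - r}$. Therefore
\begin{align*}
\sum_{n=N+1}^\infty \lambda_{n+1}^\mu\, \hat u_n^2 \;=\; \sum_{n=N+1}^\infty \lambda_{n+1}^{\mu-r}\, \lambda_{n+1}^r\, \hat u_n^2 \;\lesssim\; \alpha^{2(\mu-r)} N^{\mu-r}\, \|u\|_{r,\alpha}^2,
\end{align*}
and taking square roots yields \eqref{truncated error}.

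I do not expect a genuine obstacle here: once Lemma~\ref{lemma-seminorm estimate} is in hand, the argument is a one-line spectral tail estimate, and the dependence on $\alpha$ and $N$ falls out automatically from the eigenvalue formula $\lambda_n = 2\alpha^2 n$. The only mild subtlety is keeping track of the fact that $\mu \leq r$ is essential for the inequality $\lambda_{n+1}^{\mu-r} \leq \lambda_{N+2}^{\mu-r}$ to go in the correct direction; without this hypothesis the tail could not be controlled by a single spectral factor. The role of $\beta$ is invisible in the whole argument, which is exactly the simplification promised after the statement of the theorem.
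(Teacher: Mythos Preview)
Your proof is correct but takes a genuinely different route from the paper's. The paper argues by induction on $\mu$: the base case $\mu=0$ is exactly your tail estimate, and for the inductive step they split
\[
|u-P_Nu|_\mu \leq |\partial_x u - P_N\partial_x u|_{\mu-1} + |P_N\partial_x u - \partial_x P_N u|_{\mu-1},
\]
handle the first term by the induction hypothesis applied to $\partial_x u$ with indices $(\mu-1,r-1)$, and compute the commutator $P_N\partial_x u - \partial_x P_N u$ explicitly as $\tfrac12\sqrt{\lambda_{N+1}}\bigl(\hat u_N H_{N+1}^{\alpha,\beta}+\hat u_{N+1}H_N^{\alpha,\beta}\bigr)$, which is then estimated term by term. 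You instead apply Lemma~\ref{lemma-seminorm estimate} directly to $v=u-P_Nu$ and reduce the entire statement to a single spectral tail bound. Your route is noticeably shorter and avoids the commutator computation altogether; the paper's induction is more hands-on and makes the mechanism of each derivative visible, but that extra labor buys nothing beyond what Lemma~\ref{lemma-seminorm estimate} already packages. The final estimates and their dependence on $\alpha$ and $N$ coincide.
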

\begin{proof}
By induction, we first show that for
$\mu=0$. For any integer $r\geq0$,
\begin{align}\label{mu = 0}
    ||u-P_Nu||^2=\sum_{n=N+1}^\infty\hat{u}_n^2
        =\sum_{n=N+1}^\infty\lambda_{n+1}^{-r}\lambda_{n+1}^r\hat{u}_n^2
        \lesssim\alpha^{-2r}N^{-r}||u||_{r,\a}^2.
\end{align}
Suppose for $1\leq\mu\leq r$, (\ref{truncated error}) holds for
$\mu-1$. We need to show that (\ref{truncated error}) is also valid
for $\mu$. It is clear that
\begin{align}\label{split the seminorm into two parts}
    \left|u-P_Nu\right|_\mu\leq\left|\partial_xu-P_N\partial_xu\right|_{\mu-1}+\left|P_N\partial_xu-\partial_xP_Nu\right|_{\mu-1}.
\end{align}

On the one hand, due to the assumption for $\mu-1$, we apply
(\ref{truncated error}) to $\partial_xu$ and replace $\mu$ and $r$
with $\mu-1$ and $r-1$, respectively:
\begin{align}\label{part I of truncated error}
    |\partial_xu-P_N\partial_xu|_{\mu-1}\leq\alpha^{\mu-r}N^{\frac{\mu-r}2}||\partial_xu||_{r-1,\a}
        \lesssim\alpha^{\mu-r}N^{\frac{\mu-r}2}||u||_{r,\a},
\end{align}
where the last inequality holds with the observation that
\begin{align*}
    ||\partial_xu||_{r-1,\a}^2=\sum_{n=0}^\infty\lambda_{n+1}^{r-1}\widehat{(\partial_xu)}_n^2
\end{align*}
and
\begin{align*}
    \widehat{(\partial_xu)}_n=&\int_{\mathbb{R}}\partial_xuH_n^{\alpha,\beta}dx
        =-\int_{\mathbb{R}}u\partial_xH_n^{\alpha,\beta}dx
        \overset{\eqref{derivative_x}}=\frac{\sqrt{\lambda_{n+1}}}{2}\int_{\mathbb{R}}uH_{n+1}^{\alpha,\beta}dx
            -\frac{\sqrt{\lambda_n}}{2}\int_{\mathbb{R}}uH_{n-1}^{\alpha,\beta}dx,\\
        =&\frac{\sqrt{\lambda_{n+1}}}2\hat{u}_{n+1}-\frac{\sqrt{\lambda_n}}2\hat{u}_{n-1}.
\end{align*}
On the other hand, we have
\begin{align*}
    P_N\partial_xu-\partial_xP_Nu=&P_N\left(\sum_{n=0}^\infty\hat{u}_n\partial_xH_n^{\alpha,\beta}\right)-\sum_{n=0}^N\hat{u}_n\partial_xH_n^{\alpha,\beta}\\
        \overset{\eqref{derivative_x}}=&-\frac12\sum_{n=0}^{N-1}\sqrt{\lambda_{n+1}}\hat{u}_nH_{n+1}^{\alpha,\beta}
            +\frac12\sum_{n=0}^{N+1}\sqrt{\lambda_n}\hat{u}_nH_{n-1}^{\alpha,\beta}\\
&-\left[-\frac12\sum_{n=0}^N\sqrt{\lambda_{n+1}}\hat{u}_nH_{n+1}^{\alpha,\beta}
            +\frac12\sum_{n=0}^N\sqrt{\lambda_n}\hat{u}_nH_{n-1}^{\alpha,\beta}\right]\\
        =&\frac12\sqrt{\lambda_{N+1}}\left[\hat{u}_NH_{N+1}^{\alpha,\beta}+\hat{u}_{N+1}H_N^{\alpha,\beta}\right].
\end{align*}
This yields that
\begin{align}\label{part II of truncated error}
    \left|P_N\partial_xu-\partial_xP_Nu\right|_{\mu-1}^2 \lesssim
    \lambda_{N+1}\left(\hat{u}_N^2\left|H_{N+1}^{\alpha,\beta}\right|_{\mu-1}^2+\hat{u}_{N+1}^2\left|H_N^{\alpha,\beta}\right|_{\mu-1}^2\right),
\end{align}
due to the property of seminorms. Moreover, we estimate
$\hat{u}_k^2$ and $\left|H_k^{\alpha,\beta}\right|_{\mu-1}^2$, for
$k=N,N+1$:
\begin{align}\label{u_N}
    \hat{u}_N^2\leq\sum_{n=N}^\infty\hat{u}_n^2\leq||u-P_{N-1}u||^2\overset{\eqref{mu = 0}}\lesssim\alpha^{-2r}N^{-r}||u||_{r,\a}^2.
\end{align}
Similarly,
$\hat{u}_{N+1}^2\lesssim\alpha^{-2r}N^{-r}||u||_r^2$. And
\begin{align}\label{seminorm of H_N}
    \left|H_N^{\alpha,\beta}\right|_{\mu-1}^2
        =&\left|\left|\partial_x^{\mu-1}H_N^{\alpha,\beta}(x)\right|\right|^2
           \overset{\eqref{eqn-seminorm estimate}}\lesssim\left|\left|H_N^{\alpha,\beta}(x)\right|\right|_{\mu-1,\a}^2
	=\lambda_{N+1}^{\mu-1},
\end{align}
since $\widehat{(H_N^{\alpha,\beta})}_k=\delta_{kN}$, for
$k\in\mathbb{Z}^+$. Similarly,
$\left|H_{N+1}^{\alpha,\beta}\right|_{\mu-1}^2\lesssim\alpha^{-1}\lambda_{N+2}^{\mu-1}$.
Substitute (\ref{u_N}) and (\ref{seminorm of H_N}) into (\ref{part
II of truncated error}), we get
\begin{align}\label{part II of truncated error2}
    |P_N\partial_xu-\partial_xP_Nu|_{\mu-1}^2
         \lesssim\alpha^{2\mu-2r}N^{\mu-r}||u||_{r,\a}^2,
\end{align}
by the fact that $\lambda_N=2N\alpha^2$. Combine (\ref{split the
seminorm into two parts}), (\ref{part I of truncated error}) and
(\ref{part II of truncated error2}), we arrive the conclusion.
\end{proof}

%%%%%%%%%%%%%%%%%%%%%%%%%%%%%%%

\section{Time-dependent Galerkin Hermite spectral method (TGHSM) to nonlinear convection-diffusion equations}

\setcounter{equation}{0}

Let us consider the following nonlinear convection-diffusion equation
\begin{align}\label{1D KdVB with source}
    \left\{ \begin{aligned}
       u_t + a_1g(u)u_x-a_2 u_{xx} +a_3u_{xxx} &= f(x,t),\quad\textup{for}\
    (x,t)\in\mathbb{R}\times[0,T]\\
    u(x,0) &= u_0(x),
    \end{aligned} \right.
\end{align}
where $a_2\geq0$, $a_1,\ a_3$ are arbitrary real parameters, and $g(\cdot)\in C(\R)$ has the primitive function $G(\cdot)$.

The weak formulation of \eqref{1D KdVB with source} is 
\begin{align}\label{weak formulation}
    \left\{ \begin{aligned}
       \langle\partial_tu,\varphi\rangle - a_1\left\langle G\left(u\right),\p_x\varphi\right\rangle
       &+a_2\langle\partial_xu,\partial_x\varphi\rangle-a_3\langle\partial_{xx}u,\partial_x\varphi\rangle= \langle f,\varphi\rangle,\quad\textup{for}\
    (x,t)\in\mathbb{R}\times[0,T]\\
       u(x,0) &= u_0(x),
    \end{aligned} \right.
\end{align}

The THGSM of solving \eqref{1D KdVB with source} is to find $u_N\in\mathcal{R}_N(t)$ such
that
\begin{align}\label{HSM formulation}
    \left\{ \begin{aligned}
       \langle\partial_tu_N,\varphi\rangle - a_1\left\langle G\left(u_N\right),\p_x\varphi\right\rangle
       &+a_2\langle\partial_xu_N,\partial_x\varphi\rangle-a_3\langle\partial_{xx}u_N,\partial_x\varphi\rangle= \left\langle f,\varphi\right\rangle,\quad\textup{for}\
    (x,t)\in\mathbb{R}\times[0,T]\\
       u_N(x,0) &= P_N^{\a(0),\b(0)}u_0(x):=u_{N,0},
    \end{aligned} \right.
\end{align}
for any $\varphi\in\mathcal{R}_N(t)$.

We shall investigate the stability and convergence analysis under the following assumption\\[2pt]
\begin{enumerate}
	\item[] {\it Assumption 1:} $G(\cdot)$ has the primitive function $\tilde{G}(\cdot)$, and $\tilde{G}(0)=0$.\\[2pt]
	\item[] {\it Assumption 2:} $|g(x)|\lesssim 1+|x|^s$,and $|G(x)|\lesssim 1+|x|^{s+1}$, $\forall\,x\in\R$, for some $s\geq1$.\\[2pt]
\end{enumerate}

These two assumptions are not limited in the sense that they are satisfied by many important physical model equations, for example, the heat equation, the Burgers' equation and the Korteweg-de Vries Burger's (KdVB) equation. The first two equations are well-known and have been studied widely for a long time; while the last one is the model equation derived by Su and Gardner \cite{SG}, and first studied by \cite{S}, which describes a wide class of nonlinear systems in the weak nonlinearity and long wavelength approximations, since it contains both damping and dispersion. This model equation also has been used in the study of wave propagation through a liquid-filled elastic tube \cite{J1970} and for a description of shallow water waves on a viscous fluid \cite{J1972}. The existence and uniqueness of the global smooth solution of KdVB equation have been established in \cite{GBoling}. 
 
\subsection{A-priori estimates}

\begin{lemma}
    If $u_0\in L^2(\mathbb{R})$ and $f\in L^2\left([0,T];L^2(\mathbb{R})\right)$, and {\it Assumption 1} holds, then
\begin{align}\label{apriori lemma 1}
    \left|\left|u_N\right|\right|^2(t)+2a_2 e^T\int_0^t\left|\left|\partial_x u_N\right|\right|^2(s)ds\leq e^T\left(\left|\left|u_0\right|\right|^2+\int_0^t\left|\left|f\right|\right|^2(s)ds\right).
\end{align}
\end{lemma}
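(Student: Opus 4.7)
The target inequality is a standard energy estimate, so the strategy is: test the Galerkin weak form against $\varphi = u_N$, exploit structural cancellations in the nonlinear and dispersive terms, and close by Gronwall. Admissibility of $\varphi = u_N$ is automatic since $u_N(\cdot,t)\in\mathcal{R}_N(t)$.

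Substituting $\varphi=u_N$ into \eqref{HSM formulation} yields
\begin{equation*}
    \langle\partial_t u_N,u_N\rangle - a_1\langle G(u_N),\partial_x u_N\rangle + a_2\|\partial_x u_N\|^2 - a_3\langle\partial_{xx}u_N,\partial_x u_N\rangle = \langle f,u_N\rangle.
\end{equation*}
The first term equals $\tfrac12\tfrac{d}{dt}\|u_N\|^2$ (this holds despite the time-dependent basis, since $u_N$ is a bona fide function of $(x,t)$). For the dispersive term, integration by parts gives $\langle\partial_{xx}u_N,\partial_x u_N\rangle = \tfrac12\int_{\mathbb{R}}\partial_x(\partial_x u_N)^2\,dx = 0$, where the boundary terms vanish because every element of $\mathcal{R}_N(t)$ has Gaussian decay.

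For the nonlinear term I would invoke \emph{Assumption 1}: letting $\tilde G$ be the primitive of $G$ with $\tilde G(0)=0$,
\begin{equation*}
    \langle G(u_N),\partial_x u_N\rangle = \int_{\mathbb{R}}\partial_x\tilde G(u_N)\,dx = \bigl[\tilde G(u_N)\bigr]_{-\infty}^{+\infty} = 0,
\end{equation*}
since $u_N\to 0$ at infinity and $\tilde G$ is continuous with $\tilde G(0)=0$. (The polynomial growth of $\tilde G$ from \emph{Assumption 2} is dominated by the Gaussian decay of $u_N$, so $\tilde G(u_N)$ is integrable and vanishes at $\pm\infty$.) This is the only step with any real content; the rest is bookkeeping. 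For the right-hand side I would use Cauchy--Schwarz followed by Young's inequality: $\langle f,u_N\rangle \leq \tfrac12\|f\|^2 + \tfrac12\|u_N\|^2$.

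Combining these observations yields the differential energy inequality
\begin{equation*}
    \frac{d}{dt}\|u_N\|^2 + 2a_2\|\partial_x u_N\|^2 \leq \|f\|^2 + \|u_N\|^2.
\end{equation*}
Multiplying by $e^{-t}$ gives $(e^{-t}\|u_N\|^2)' + 2a_2 e^{-t}\|\partial_x u_N\|^2 \leq e^{-t}\|f\|^2$; integrating from $0$ to $t$, using $\|u_{N,0}\|\leq\|u_0\|$ (orthogonal projection is a contraction), and finally multiplying through by $e^T$ and using $e^{-s}\geq e^{-T}$ on $[0,T]$ for the $\partial_x u_N$ term and $e^{-s}\leq 1$ for the $f$ term produces the stated bound \eqref{apriori lemma 1}. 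The main (mild) obstacle is the nonlinear cancellation; everything else is routine once one commits to the $L^2$ energy framework that the GHF formulation makes available.
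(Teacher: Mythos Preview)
Your proposal is correct and follows essentially the same route as the paper: test against $u_N$ (the paper uses $2u_N$), use \emph{Assumption 1} to kill the nonlinear term via $\int_{\mathbb{R}}\partial_x\tilde G(u_N)\,dx=0$, integrate by parts to kill the dispersive term, apply Cauchy--Schwarz/Young on the source, and close with Gronwall. You are slightly more explicit than the paper in invoking the contraction property $\|u_{N,0}\|\le\|u_0\|$ of the orthogonal projection, which the paper uses implicitly when passing from $\|u_N\|^2(0)$ to $\|u_0\|^2$.
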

\begin{proof}
Take the test function $\varphi = 2u_N$
in (\ref{HSM formulation}), we have
\begin{align*}
    2\left\langle\partial_tu_N,u_N\right\rangle
-2a_1\left\langle G(u_N),\p_xu_N\right\rangle
+2a_2\left\langle\partial_xu_N,\partial_xu_N\right\rangle-2a_3\left\langle\partial_{xx}u_N,\partial_xu_N\right\rangle=2\left\langle f,u_N\right\rangle.
\end{align*}
Under {\it Assumption 1}, we have
\begin{align*}%\label{eqn-square term}
	\left\langle G\left(u_N\right),\p_xu_N\right\rangle
=\left.\tilde{G}(u_N)\right|_{-\infty}^{\infty}=0,
\end{align*}
due to the fact that $\displaystyle\lim_{x\rightarrow\pm\infty}u_N(x,t)=0$, for all $t\in[0,T]$. And we also obtain that
\begin{align}\label{eqn-second derivative term}
	\left\langle\partial_{xx}u_N,\partial_xu_N\right\rangle
=\frac12\int_{\mathbb{R}}\partial_x[(\partial_xu_N)^2]dx=0,
\end{align}
since $u_N\in\CRN$, i.e., $\displaystyle\lim_{x\rightarrow\pm\infty}\p_x^ru_N(x,t)=0$, for any $r\in\N$ and $t\in[0,T]$.
That is,
\begin{align*}%\label{apriori estimate 1}
    \frac d{dt}\left|\left|u_N\right|\right|^2+2a_2\left|\left|\partial_xu_N\right|\right|^2&=2\int_{\mathbb{R}}fu_Ndx\leq\left|\left|f\right|\right|^2+\left|\left|u_N\right|\right|^2,
\end{align*}
The Gronwall's inequality yields that
\begin{align*}
    \left|\left|u_N\right|\right|^2(t)+2a_2\int_0^te^{t-s}\left|\left|\partial_xu_N\right|\right|^2(s)ds
        \leq e^t\left|\left|u_N\right|\right|^2(0)+\int_0^te^{t-s}\left|\left|f\right|\right|^2(s)ds,
\end{align*}
which implies (\ref{apriori lemma 1}). 
\end{proof}

\subsection{Stability}

We shall consider the stability of \eqref{HSM formulation} in the sense of Guo \cite{G}, since it is impossible to prove the stability of \eqref{HSM formulation} in the sense of Courant et al. \cite{CFL}. Let us assume that $f$ and $u_{N,0}$ have the errors $\tf$ and $\tui$, respectively. They will introduce error in $u_N$, denoted as $\tuN$. The error satisfies the following equation
\begin{align}\label{eqn-error's equation}
	\left\{\begin{aligned}
		\left\langle\p_t\tuN,\varphi\right\rangle-a_1\left\langle G(u_N)-G(u_N+\tuN),\p_x\varphi\right\rangle+a_2\left\langle\p_x\tuN,\p_x\varphi\right\rangle-a_3\left\langle\p_{xx}\tuN,\p_x\varphi\right\rangle=&\left\langle\tf,\varphi\right\rangle\\
	\tuN(x,0)=&\tui,
	\end{aligned}\right.
\end{align}
where $(x,t)\in \R\times[0,T]$, for all $\varphi\in\CRN$. By taking $\varphi=2\tuN$ in \eqref{eqn-error's equation}, it yields that
\begin{align}\label{eqn-stability 1}
	\frac d{dt}||\tuN||^2-2a_1\left\langle g(u_N+\theta\tuN)\tuN,\p_x\tuN\right\rangle+2a_2||\p_x\tuN||^2=2\left\langle\tf,\tuN\right\rangle\leq\left|\left|\tf\right|\right|^2+\left|\left|\tuN\right|\right|^2,
\end{align}
due to the similar argument as in \eqref{eqn-second derivative term}. The second term on the left-hand side of \eqref{eqn-stability 1} can be easily estimated as
\begin{align}\label{eqn-estimate of g_triple}\notag
	|\langle g(u_N+\theta\tuN)\tuN,\p_x\tuN\rangle|
	\lesssim& \frac 1\epsilon||g(u_N+\theta\tuN)\tuN||^2+\epsilon||\p_x\tuN||^2\\\notag
	\lesssim& \frac 1\epsilon\left(1+||u_N||_\infty^s+||\tuN||_\infty^s\right)||\tuN||^2+\epsilon||\p_x\tuN||^2\\
	\lesssim& C(\epsilon,s,||u_N||_{L^\infty(0,T;L^\infty(\R))},||\tuN||_{L^\infty(0,T;L^\infty(\R))})||\tuN||^2+\epsilon||\p_x\tuN||^2,
\end{align}
where 
\[
	C(\epsilon,s,||u_N||_{L^\infty(0,T;L^\infty(\R)),||\tuN||_{L^\infty(0,T;L^\infty(\R))}})=\frac1\epsilon\left(||u_N||_{L^\infty(0,T;L^\infty(\R))}^s+||\tuN||_{L^\infty(0,T;L^\infty(\R))}^s\right).
\]
This constant is finite, due to the fact that $u_N$, $\tuN\in\CRN$, i.e., $||u_N||_\infty(t)$, $||\tuN||_\infty(t)<\infty$, for any $t\in[0,T]$. Therefore, we arrive the following stability result:
\begin{theorem}\label{thm-stability}
	Let $u_N$  be the solution of \eqref{HSM formulation} and $\tuN$ be error induced by the error of the source term $\tf$ and that of the initial condition $\tui$. Then for any $t\leq T$, we have
\begin{align*}
	||\tuN||^2+&C(a_1,a_2,\epsilon)\int_0^t||\p_x\tuN(s)||^2ds\\
	\lesssim& \rho\left(\tui,\tf,t\right)+C(a_1,s,||u_N||_{L^\infty(0,T;L^\infty(\R))},||\tuN||_{L^\infty(0,T;L^\infty(\R))},\epsilon)\int_0^t||\tuN(s)||^2ds,
\end{align*}
where 
\begin{align*}	
	C(a_1,a_2,\epsilon)=&2(a_2-a_1\epsilon)>0,\\[4pt]
	C(a_1,s,||u_N||_{L^\infty(0,T;L^\infty(\R))},&||\tuN||_{L^\infty(0,T;L^\infty(\R))},\epsilon)\\
	=&\frac {2a_1}\epsilon\left(||u_N||_{L^\infty(0,T;L^\infty(\R))}^s+||\tuN||_{L^\infty(0,T;L^\infty(\R))}^s\right)+1,
\end{align*}
and 
\[
	\rho\left(\tui,\tf,t\right)=||\tui||^2+\int_0^t\left|\left|\tf(s)\right|\right|^2ds.
\]
In particular, for all $t\leq T$,
\[
	||\tuN||^2+\int_0^t||\p_x\tuN(s)||^2ds\leq\rho\left(\tui,\tf,t\right)e^{C(a_1,u_N,T,\epsilon)t}.
\]
\end{theorem}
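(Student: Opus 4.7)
The plan is to take equations \eqref{eqn-stability 1} and \eqref{eqn-estimate of g_triple}, which have already been derived in the preceding paragraphs, combine them so that the good coercive term $2a_2\|\p_x\tuN\|^2$ absorbs the bad term $\epsilon\|\p_x\tuN\|^2$, integrate in time, and then close the estimate with Gronwall's inequality.

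First I would insert \eqref{eqn-estimate of g_triple} (multiplied by $2|a_1|$) into \eqref{eqn-stability 1} to obtain the pointwise-in-$t$ inequality
\begin{align*}
	\frac d{dt}\|\tuN\|^2+2a_2\|\p_x\tuN\|^2
	\le \|\tf\|^2+\|\tuN\|^2+2a_1\epsilon\|\p_x\tuN\|^2+2a_1 C_\epsilon\|\tuN\|^2,
\end{align*}
where $C_\epsilon=\frac1\epsilon(\|u_N\|_{L^\infty(0,T;L^\infty(\R))}^s+\|\tuN\|_{L^\infty(0,T;L^\infty(\R))}^s)$, which is finite because $u_N(\cdot,t),\tuN(\cdot,t)\in\CRN(t)$ is a finite-dimensional space at every $t\in[0,T]$. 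Choosing $\epsilon>0$ so that $a_2-a_1\epsilon>0$ (automatic if $a_1=0$, otherwise take $\epsilon<a_2/a_1$), the $\p_x\tuN$ term on the right can be moved to the left to yield
\begin{align*}
	\frac d{dt}\|\tuN\|^2+2(a_2-a_1\epsilon)\|\p_x\tuN\|^2
	\le \|\tf\|^2+\bigl(1+2a_1 C_\epsilon\bigr)\|\tuN\|^2,
\end{align*}
which is exactly the differential form of the claimed first inequality.

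Next I would integrate this from $0$ to $t$, using $\tuN(\cdot,0)=\tui$ and the definition of $\rho(\tui,\tf,t)=\|\tui\|^2+\int_0^t\|\tf(s)\|^2ds$. This gives
\begin{align*}
	\|\tuN\|^2(t)+C(a_1,a_2,\epsilon)\int_0^t\|\p_x\tuN(s)\|^2ds
	\le \rho(\tui,\tf,t)+C(a_1,s,\ldots,\epsilon)\int_0^t\|\tuN(s)\|^2ds
\end{align*}
with $C(a_1,a_2,\epsilon)=2(a_2-a_1\epsilon)$ and $C(a_1,s,\ldots,\epsilon)=1+2a_1 C_\epsilon$, which is precisely the first assertion of the theorem. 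For the second assertion I would drop the (nonnegative) coercive term on the left and observe that $\rho(\tui,\tf,t)$ is nondecreasing in $t$; Gronwall's inequality applied to $\|\tuN\|^2(t)\le\rho(\tui,\tf,t)+C(a_1,s,\ldots,\epsilon)\int_0^t\|\tuN(s)\|^2ds$ then yields the exponential bound, and reinserting this into the already-integrated inequality controls $\int_0^t\|\p_x\tuN(s)\|^2ds$ as well.

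The only real subtlety, rather than obstacle, is the finiteness of $C_\epsilon$: one must justify the $L^\infty(0,T;L^\infty(\R))$ bounds on $u_N$ and $\tuN$. This is harmless because $\CRN(t)$ is finite-dimensional so every element is automatically in $L^\infty(\R)$, and an inverse-type estimate in the spirit of Lemma \ref{lemma-estimate of L2 norm of phi in RN} combined with the a priori $L^2$ bound \eqref{apriori lemma 1} (applied to $u_N$ and to the error equation for $\tuN$) supplies the required time-uniform bound. Apart from this bookkeeping, the argument is a direct energy-plus-Gronwall computation.
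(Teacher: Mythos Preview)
Your proposal is correct and follows exactly the approach of the paper: the paper derives \eqref{eqn-stability 1} and \eqref{eqn-estimate of g_triple} in the text immediately preceding the theorem and then simply writes ``Therefore, we arrive the following stability result,'' leaving the combination, integration over $[0,t]$, and Gronwall step implicit. You have spelled out precisely those implicit steps, with the same choice of constants and the same justification for the finiteness of $C_\epsilon$ via $u_N,\tuN\in\CRN(t)$.
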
 
This theorem implies that the error of the numerical solution is controlled by the errors of the initial data $\tui$ and the source term $\tf$. It means that \eqref{HSM formulation} is of generalized stability in the sense of Guo \cite{G}.

\subsection{Convergence analysis}

\begin{theorem}\label{thm-convergence}
    If $u_0\in H_r(\R)$, and {\it Assumption 1-2} are satisfied, then for any $u\in L^\infty(0,T;H_{r,\a(t)}(\R))\cap L^2(0,T;H_{r,\a(t)}(\R))$ with $r>\max\{2,\frac s4\}$ and $N\gg1$, we have
	\begin{enumerate}
		\item If $s< 8$, then
    \begin{align}\label{eqn-u-u_N_s<8}
        ||u-u_N||^2(t)\lesssim& \a^{-2r}N^{-r}||u||_{L^\infty(0,T;H_{r,\a(t)}(\R))}^2+CN^{2-r}\int_0^te^{(t-\tau)}||u(\tau)||_{r,\a(\tau)}^2d\tau;
    \end{align}
		\item If $s\geq8$, then
	\begin{align}\label{eqn-u-u_N_s>8}
		||u-u_N||^2(t)\lesssim& \a^{-2r}N^{-r}||u||_{L^\infty(0,T;H_{r,\a(t)}(\R))}^2+N^{\frac s4-r}\int_0^te^{(t-\tau)}\a^{\frac s2-2r}(\tau)||u(\tau)||_{r,\a(\tau)}^2d\tau
	\end{align}
	\end{enumerate}
    where $s$ is the growth rate of $g(\cdot)$ in {\it Assumption 2} and $C$ in \eqref{eqn-u-u_N_s<8} may depend on $\a$, $\a'$, $s$, $r$, $||u_N||_{L^\infty(0,T;L^2(\R))}$,  $||u||_{L^\infty(0,T;H_{r,\a(t)}(\R))}$ and $||u||_{L^2(0,T;H_{r,\a(t)}(\R))}$, etc.
\end{theorem}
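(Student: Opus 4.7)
My plan is the classical spectral Galerkin error decomposition: write $u-u_N=\eta+\xi$, where $\eta:=u-P_N^{\a,\b}u$ is the projection error and $\xi:=P_N^{\a,\b}u-u_N\in\CRN(t)$ is the discrete error. Theorem \ref{theorem-truncation error} (applied with $\mu=0,1,2$) immediately supplies
\[
\|\eta\|^2\lesssim\a^{-2r}N^{-r}\|u\|_{r,\a}^2,\quad \|\p_x\eta\|^2\lesssim\a^{2-2r}N^{1-r}\|u\|_{r,\a}^2,\quad \|\p_{xx}\eta\|^2\lesssim\a^{4-2r}N^{2-r}\|u\|_{r,\a}^2;
\]
the first of these is precisely the leading term $\a^{-2r}N^{-r}\|u\|_{L^\infty(H_{r,\a})}^2$ appearing in both \eqref{eqn-u-u_N_s<8} and \eqref{eqn-u-u_N_s>8}, so the task reduces to controlling $\|\xi\|^2(t)$ by a Gronwall argument.

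Subtracting \eqref{HSM formulation} from the restriction of \eqref{weak formulation} to test functions $\varphi\in\CRN(t)$, substituting $u-u_N=\eta+\xi$, and testing with $\varphi=2\xi\in\CRN(t)$, I get (using $\langle\p_{xx}\xi,\p_x\xi\rangle=0$ by the same integration by parts as in \eqref{eqn-second derivative term})
\[
\frac d{dt}\|\xi\|^2+2a_2\|\p_x\xi\|^2=-2\langle\p_t\eta,\xi\rangle+2a_1\langle G(u)-G(u_N),\p_x\xi\rangle-2a_2\langle\p_x\eta,\p_x\xi\rangle+2a_3\langle\p_{xx}\eta,\p_x\xi\rangle.
\]
The two linear projection-error terms are handled by Cauchy--Schwarz plus Young's inequality, with $\|\p_x\xi\|^2$ absorbed into the dissipation on the left; the $a_3$-contribution is the forcing $\a^{4-2r}N^{2-r}\|u\|_{r,\a}^2$, which is the source of the $N^{2-r}$ in \eqref{eqn-u-u_N_s<8}. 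Since the Hermite basis itself evolves in $t$, the projection identity $\langle\eta,\phi\rangle=0$ for $\phi\in\CRN(t)$ does not kill $\langle\p_t\eta,\xi\rangle$ directly; instead I would expand $\p_t\eta=\p_tu-\p_t(P_Nu)$ and invoke Lemma \ref{lemma-derivative_t} on $\p_tH_n^{\a,\b}$, which is what produces the dependence of $C$ on $\a$, $\a'$, $\b'$.

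For the nonlinear term I use the mean value theorem to write $G(u)-G(u_N)=g(\tilde u)(u-u_N)$ with $\tilde u$ between $u$ and $u_N$, and \emph{Assumption 2} to get $|g(\tilde u)|\lesssim 1+|u|^s+|u_N|^s$. The uniform $L^2$-bound on $u_N$ from \eqref{apriori lemma 1}, the 1D Gagliardo--Nirenberg inequality $\|v\|_\infty^2\lesssim\|v\|\,\|\p_xv\|$, and Lemma \ref{lemma-estimate of L2 norm of phi in RN} together yield the inverse estimate $\|u_N\|_\infty\lesssim\a^{1/2}N^{1/4}$, hence $\|g(\tilde u)\|_\infty\lesssim\a^{s/2}N^{s/4}$. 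Splitting $u-u_N=\eta+\xi$ and estimating $\langle g(\tilde u)\eta,\p_x\xi\rangle$ by an interpolation-style Hölder/Young bound that uses $\|g(\tilde u)\|_\infty$ only linearly against $\|\eta\|^2$, the $\eta$-part contributes a forcing of order $\a^{s/2-2r}N^{s/4-r}\|u\|_{r,\a}^2$, while the $\xi$-part contributes a term bounded by $\|\xi\|^2$ whose coefficient is $N$-independent (swept into $C$). The dichotomy in the statement then reflects a comparison between the dispersive coefficient $N^{2-r}$ and the nonlinear coefficient $N^{s/4-r}$: since $r>\max\{2,s/4\}$, the former dominates when $s<8$ (yielding \eqref{eqn-u-u_N_s<8}) and the latter when $s\geq8$ (yielding \eqref{eqn-u-u_N_s>8}), and the two exponents coincide at $s=8$. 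Gronwall applied to the resulting ODI for $\|\xi\|^2$, followed by the triangle inequality $\|u-u_N\|^2\leq 2(\|\eta\|^2+\|\xi\|^2)$, closes the argument.

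The main obstacles will be twofold. First, the time-dependence of the basis makes $\langle\p_t\eta,\xi\rangle$ a genuinely nontrivial object and forces explicit use of Lemma \ref{lemma-derivative_t}, injecting the $\a'/\a$ and $\a\b'$ factors into the constant. Second and more delicate, the nonlinear estimate must be arranged so that $\|g(\tilde u)\|_\infty\sim\a^{s/2}N^{s/4}$ enters the $\eta$-forcing linearly (producing the sharp $N^{s/4-r}$ rather than the naive $N^{s/2-r}$), while the absorption into the dissipation $2a_2\|\p_x\xi\|^2$ still keeps the Gronwall coefficient of $\|\xi\|^2$ independent of $N$; this is precisely why $\|u_N\|_{L^\infty(0,T;L^2(\R))}$ has to appear in the list of quantities on which $C$ depends.
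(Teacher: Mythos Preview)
Your proposal is correct and tracks the paper's proof closely: the same projection/discrete-error split (the paper writes $U_N=P_N^{\a,\b}u$ and $\eta_N=u_N-U_N=-\xi$), the same energy identity after testing with $2\eta_N$, Lemma~\ref{lemma-derivative_t} for the $\p_t$-commutator term, Theorem~\ref{theorem-truncation error} for the linear $\p_x$- and $\p_{xx}$-residuals (yielding the $N^{1-r}$ and $N^{2-r}$ forcings), Gagliardo--Nirenberg plus Lemma~\ref{lemma-estimate of L2 norm of phi in RN} for the $L^\infty$-bound producing $N^{s/4}$, and Gronwall to close.

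The one organizational difference worth noting concerns exactly the second obstacle you flagged. Rather than writing $G(u)-G(u_N)=g(\tilde u)(\eta+\xi)$ with a single intermediate $\tilde u$ between $u$ and $u_N$, the paper inserts $U_N$ and treats $\langle G(u)-G(U_N),\p_x\eta_N\rangle$ and $\langle G(u_N)-G(U_N),\p_x\eta_N\rangle$ separately. In the first piece the mean-value intermediate lies between $u$ and $U_N$ (never $u_N$), so the paper can record, in addition to the inverse estimate $\|U_N\|_\infty\lesssim\a^{1/2}N^{1/4}\|u\|$, the $N$-independent bound $\|U_N\|_\infty\lesssim\a^{(1-r)/2}\|u\|_{r,\a}^{1/2}$ obtained from Theorem~\ref{theorem-truncation error}. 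That is the device the paper uses to keep the Gronwall coefficient of $\|\eta_N\|^2$ from picking up a power of $N$; your direct split through $\tilde u$ between $u$ and $u_N$ would need a separate argument for this, since only an $L^2$ a~priori bound on $u_N$ is available.
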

\begin{proof}
Let $U_N=P_N^{\a,\b}u$ for simpler notation. By
the weak formulation (\ref{weak formulation}) of the nonlinear convection-diffusion equation \eqref{1D KdVB with source}, $U_N$ satisfies
\begin{align}\label{U_N's equation}
    \left\{ \begin{aligned}
        \langle\partial_tU_N,\varphi\rangle-&a_1\left\langle G\left(U_N\right),\p_x\varphi\right\rangle+a_2\langle\partial_xU_N,\partial_x\varphi\rangle-a_3\langle\partial_{xx}U_N,\partial_x\varphi\rangle\\
            =&-\left\langle\partial_t\left(u-U_N\right),\varphi\right\rangle+a_1\left\langle G(u)-G\left(U_N\right),\p_x\varphi\right\rangle\\
	&-a_2\left\langle\partial_x\left(u-U_N\right),\partial_x\varphi\right\rangle+a_3\left\langle\partial_{xx}\left(u-U_N\right),\partial_x\varphi\right\rangle+\langle f,\varphi\rangle\\
       U_N(x,t)=&P_N^{\a(0),\b(0)}u_0(x).
    \end{aligned} \right.
\end{align}
Let $\eta_N=u_N-U_N$. It satisfies the following equation, which is the difference of (\ref{HSM formulation}) and
(\ref{U_N's equation}):
\begin{align}\label{eta's equation}
    \left\{ \begin{aligned}
        \langle\partial_t\eta_N,\varphi\rangle&+a_2\langle\partial_x\eta_N,\partial_x\varphi\rangle-a_3\langle\partial_{xx}\eta_N,\partial_x\varphi\rangle\\
            =&\left\langle\partial_t\left(u-U_N\right),\varphi\right\rangle+a_1\left\langle G\left(u_N\right),\p_x\varphi\right\rangle-a_1\langle G(u)-G\left(U_N\right),\partial_x\varphi\rangle\\
             &+a_2\left\langle\partial_x\left(u-U_N\right),\partial_x\varphi\right\rangle
                -a_3\left\langle\partial_{xx}\left(u-U_N\right),\partial_x\varphi\right\rangle:=\sum_{i=1}^5G_i(\varphi)\\
       \eta_N(x,t)=&0,
    \end{aligned} \right.
\end{align}
for any $\varphi(\cdot,t)\in\mathcal{R}_N(t)$. Let us take the test function $\varphi=2\eta_N$, then
\begin{align}\label{LHS of eta's equation}
    \textup{L.H.S.\ of\ }(\ref{eta's equation})
        =&2\langle\partial_t\eta_N,\eta_N\rangle+2a_2\langle\partial_x\eta_N,\partial_x\eta_N\rangle-2a_3\langle\partial_{xx}\eta_N,\partial_x\eta_N\rangle
        \overset{\eqref{eqn-second derivative term}}=\frac d{dt}\left|\left|\eta_N\right|\right|^2+2a_2\left|\left|\partial_x\eta_N\right|\right|^2
\end{align}
Next, we estimate the right-hand side of (\ref{eta's equation}) term by term:
\begin{align*}
    \frac12G_1(2\eta_N)=&\left\langle\partial_t\left(u-U_N\right),\eta_N\right\rangle
       \overset{\eqref{eqn-derivative_t}}=\left\langle\partial_t\left[\hat{u}_{N+1}H_{N+1}^{\alpha,\beta}+\hat{u}_{N+2}H_{N+2}^{\a,\b}\right],\eta_N\right\rangle\\
	=&\left\langle\hat{u}_{N+1}\partial_tH_{N+1}^{\alpha,\beta},\eta_N\right\rangle+\left\langle\p_t\hat{u}_{N+1}H_{N+1}^{\a,\b},\eta_N\right\rangle\\
	&+\left\langle\hat{u}_{N+2}\partial_tH_{N+2}^{\alpha,\beta},\eta_N\right\rangle+\left\langle\p_t\hat{u}_{N+2}H_{N+2}^{\a,\b},\eta_N\right\rangle\\
     =&\left\langle\hat{u}_{N+1}\partial_tH_{N+1}^{\alpha,\beta},\eta_N\right\rangle+\left\langle\hat{u}_{N+2}\partial_tH_{N+2}^{\alpha,\beta},\eta_N\right\rangle,
\end{align*}
where the second and the last equalities are due to the fact that $\eta_N\in\mathcal{R}_N(t)$. Hence,  
\begin{align}
    \left|G_1\left(2\eta_N\right)\right|
        \leq&2\left|\hat{u}_{N+1}\right|\cdot\left|\left|\partial_tH_{N+1}^{\alpha,\beta}(x,t)\right|\right|\cdot\left|\left|\eta_N\right|\right|
	+2\left|\hat{u}_{N+2}\right|\cdot\left|\left|\partial_tH_{N+2}^{\alpha,\beta}(x,t)\right|\right|\cdot\left|\left|\eta_N\right|\right|\\\notag
      \overset{\eqref{eqn-derivative_t}}\lesssim&\frac{|\alpha'|}\alpha N\left(\left|\hat{u}_{N+1}\right|+\left|\hat{u}_{N+2}\right|\right)||\eta_N||.
\end{align}
Note the fact that
\begin{align*}
	\left|\hat{u}_{N+1}\right|\leq\left(\sum_{k=N+1}^\infty\hat{u}_k^2\right)^{\frac12}
	=\left(\sum_{k=N+1}^\infty\l_{k+1}^{-r}\l_{k+1}^r\hat{u}_k^2\right)^{\frac12}
	\leq\l_{N+2}^{-\frac r2}\left(\sum_{k=N+1}^\infty\l_{k+1}^r\hat{u}_k^2\right)^{\frac12}
	\lesssim\a^{-r}(t)N^{-\frac r2}||u||_{r,\a(t)}.
\end{align*}
Similarly, we have $\left|\hat{u}_{N+2}\right|\lesssim\a^{-r}(t)N^{-\frac r2}||u||_{r,\a(t)}$. We obtain the estimate of $\left|G_1(2\eta_N)\right|$:
\begin{align}\label{eqn-G_1}
	|G_1(2\eta_N)|\lesssim|\a'(t)|\a^{-(r+1)}(t)N^{1-\frac r2}||u||_{r,\a(t)}\cdot||\eta_N||
	\lesssim ||\eta_N||^2+|\a'(t)|^2\a^{-2-2r}(t)N^{2-r}||u||_{r,\a(t)}^2.
\end{align}
The second term $G_2(2\eta_N)$ on the right hand side of \eqref{eta's equation} can be estimated similarly as in \eqref{eqn-estimate of g_triple}:
\begin{align}\label{eqn-G_2}
	|G_2(2\eta_N)|=2a_1\left|\left\langle G(u_N),\p_x\eta_N\right\rangle\right|
	\lesssim\frac {a_2}2||\p_x\eta_N||^2+C||u_N||^s_{L^\infty(0,T;L^2(\R))}+C,
\end{align}
due to {\it Assumption 2}. The third term $G_3(2\eta_N)$ on the right hand side of \eqref{eta's equation} yields:
\begin{align*}
	|G_3(2\eta_N)|
	\lesssim|&\langle G(u)-G(U_N),\p_x\eta_N\rangle|
	=|\langle g(U_N+\theta(u-U_N))(u-U_N),\p_x\eta_N\rangle|\\
	\lesssim& C(1+||u||_\infty^s+||U_N||_\infty^s)||u-U_N||^2+\frac {a_2}2||\p_x\eta_N||^2
\end{align*}
By the interpolation inequality (cf. \cite{F}), we have
\begin{align*}
	||U_N||_\infty\lesssim||U_N||^{\frac12}||\p_xU_N||^{\frac12}
	\lesssim\a^{\frac12}N^{\frac14}||U_N||\leq\a^{\frac12}N^{\frac14}||u||,
\end{align*}
by Lemma \ref{lemma-estimate of L2 norm of phi in RN}, where $||\cdot||_\infty$ is the $L^\infty$ norm with respect to $x$. With this fact, we continue to estimate $G_3(2\eta_N)$:
\begin{align}\label{eqn-G_3}\notag
	|G_3(2\eta_N)|\lesssim&\frac{a_2}2||\p_x\eta_N||^2+C\a^{-2r}(t)N^{-r}\left(||u||_\infty^s+\a^{\frac s2}(t) N^{\frac s4}||u||^s\right)||u||_{r,\a(t)}^2\\
	\leq&\frac{a_2}2||\p_x\eta_N||^2+C\a^{\frac s2-2r}(t)N^{\frac s4-r}||u||_{r,\a(t)}^2.
\end{align}
Other than the estimate of $||U_N||_\infty$, by Theorem \ref{theorem-truncation error}, we have
\begin{align}\label{eqn-infty of U_N}
	||U_N||_\infty\lesssim||U_N||^{\frac12}||\p_xU_N||^{\frac12}
\lesssim\a^{\frac{1-r}2}(t)\left(1+N^{\frac{1-r}2}\right)^{\frac12}||u||^{\frac12}||u||_{r,\a(t)}^{\frac12}\lesssim\a^{\frac{1-r}2}(t)||u||_{r,\a(t)}^{\frac12}.
\end{align}
The rest two terms $G_i(2\eta_N)$, $i=4,5$, on the right-hand side of \eqref{eta's equation} can be easily bounded as follows:
\begin{align}\label{eqn-G_4}
	|G_4(2\eta_N)|\lesssim& \frac{a_2}2||\p_x\eta_N||^2+C||\p_x(u-U_N)||^2
	\leq\frac{a_2}2||\p_x\eta_N||^2+C\a^{2-2r}(t)N^{1-r}||u||_{r,\a(t)}^2;
\end{align}
and 
\begin{align}\label{eqn-G_5}
	|G_5(2\eta_N)|\lesssim\frac{a_2}2||\p_x\eta_N||^2+C||\p_{xx}(u-U_N)||^2
	\lesssim\frac{a_2}2||\p_x\eta_N||^2+C\a^{4-2r}(t)N^{2-r}||u||_{r,\a(t)}^2.
\end{align}
Substituting \eqref{eqn-G_1}-\eqref{eqn-G_3}, \eqref{eqn-G_4} and \eqref{eqn-G_5} into the right-hand side of \eqref{eta's equation} and combining \eqref{LHS of eta's equation}, we obtain that
\begin{align*}
	\frac d{dt}\left|\left|\eta_N\right|\right|^2
\lesssim& ||\eta_N||^2+\left(CN^{2-r}+\a^{\frac s2-2r}(t)N^{\frac s4-r}\right)||u||_{r,\a(t)}^2,
\end{align*}
where $C$ is the function of $t$ dependent of $\a$, $\a'$, $s$, $r$, $||u_N||_{L^\infty(0,T;L^2(\R))}$, etc. The results \eqref{eqn-u-u_N_s<8} and \eqref{eqn-u-u_N_s<8} are obtained by the Gronwall's inequality and Theorem \ref{theorem-truncation error}. 
\end{proof}

\section{Numerical results} 

\setcounter{equation}{0}

In this section, we shall use our THGSM \eqref{weak formulation} to numerically solve three benchmark equations all with $g(u)=\frac u2$ in \eqref{1D KdVB with source}, including the heat equation, the Burgers' equation and the KdVB equation with different coefficients $a_i$, $i=1,2,3$ in \eqref{1D KdVB with source}. To illustrate the high accuracy of our method, we shall compare our numerical results of the first two equations with the ones obtained by using the similarity transformation technique \cite{FK} and by using the stabilized Hermite spectral method developed in \cite{MST}. 

In our scheme \eqref{HSM formulation}, we choose the test function $\varphi=H_m^{\a,\b}(x,t)\in\CRN(t)$, for $m=0,1,\cdots,N$. It yields a system of ordinary differential equations (ODE) for $\hat{u}_m(t)$, $m=0,1,\cdots,N$. The second term on the left-hand side and the one on the right-hand side of \eqref{HSM formulation} need to be computed numerically; while all the other terms can be explicitly written as sparse matrices, which are followed from the properties of the GHF. To be more precise, the first term gives
\begin{align*}
	\frac d{dt}\vec{\hat{u}}+A_1\vec{\hat{u}}
\end{align*} 
where $\vec{\hat{u}}=[\hat{u}_0,\hat{u}_1,\cdots,\hat{u}_N]^T$, the superscript $T$ means the transpose, where the matrix $A_1$ is explicitly given by \eqref{eqn-derivative_t} as:
\begin{align}\label{eqn-A_1}
	A_1(i,j)=\left\{\begin{aligned}
	-\frac{\a'}\a d(i)d(i-1),\quad&\textup{if}\quad i=j+2\\
	\a\b' d(i),\quad&\textup{if}\quad i=j+1\\
	-\a\b' d(j),\quad&\textup{if}\quad i=j-1\\
	\frac{\a'}\a d(j)d(j-1),\quad&\textup{if}\quad i=j-2\\
	0,\quad&\textup{otherwise}
	\end{aligned},\right.
\end{align}
$d(n)=\sqrt{\frac n2}$ as before. The third term on the left-hand side of \eqref{HSM formulation} gives $a_2A_3\vec{\hat{u}}$, where 
\begin{align}\label{eqn-A_3}
	A_3(i,j)=\left\{\begin{aligned}
		\a^2(t)[d^2(i+1)+d^2(i)],\quad&\textup{if}\
    i=j\\
        -\a^2(t)d(l+1)d(l+2),\quad& l=\min\{i,j\},\ \textup{if}
    \ |i-j|=2\\
       0,\quad&\textup{otherwise}
	\end{aligned},\right.
\end{align}
 by \eqref{orthogonality of derivative}. And the last term on the left-hand side of \eqref{HSM formulation} yields $-a_3A_4\vec{\hat{u}}$, where
\begin{align}\label{eqn-A_4}
	A_4(i,j)=\left\{\begin{aligned}
		\a^2(t)d(j+1)d(j+2)d(j+3),\quad&\textup{if}\ i=j+3\\
		-\a^2(t)\left[d^3(j+1)+d^2(j)d(j+1)+d(j+1)d^2(j+2)\right],\quad&\textup{if}\ i=j+1\\
		\a^2(t)\left[d^3(i+1)+d^2(i)d(i+1)+d(i+1)d^2(i+2)\right],\quad&\textup{if}\ i=j-1\\
		-\a^2(t)d(i+1)d(i+2)d(i+3),\quad&\textup{if}\ i=j-3\\
		0,\quad&\textup{otherwise}
	\end{aligned}.\right.
\end{align}
At last, the THGSM \eqref{HSM formulation} yields the ODE system for the Fourier-Hermite coefficients $\vec{\hat{u}}$:
\begin{align}\label{eqn-ODE}
	\frac {d\vec{\hat{u}}}{dt}+A\vec{\hat{u}}+B\left(\vec{\hat{u}}\right)=\vec{\hat{f}},
\end{align}
where $B\left(\vec{\hat{u}}\right)$ is given by the second term $\frac{a_1}2\langle\p_x\left(u^2\right),\varphi\rangle$ of the left-hand side of \eqref{HSM formulation}, $\vec{\hat{f}}$ is the Fourier-Hermite coefficients of the source term $f(x,t)$ and $A=A_1+A_3+A_4$, $A_1$, $A_3$ and $A_4$ are given in \eqref{eqn-A_1}-\eqref{eqn-A_4}, respectively. $\vec{\hat{f}}$ and $B\left(\vec{\hat{u}}\right)$ need to be computed numerically. The equation \eqref{eqn-ODE} will be solved by using the Crank-Nicolson/Euler forward scheme in all the following examples. In particular, the Crank-Nicolson scheme is applied to all the linear terms, while the Euler forward scheme is applied to the nonlinear term $B\left(\vec{\hat{u}}\right)$.

In all the numerical experiments below, we shall use the $L^2$-norm and the relative $L^\infty$-norm to measure the accuracy. In our context, let us denote the $L^2$ error and the relative $L^\infty$ error as
\[
	E_N(t)=\left|\left|u_N(t)-u_{exact}(t)\right|\right|
\]
and 
\[
 E_{N,\infty}(t)=\frac{\max_{0\leq j\leq N}|u_N(y_j,t)-u_{exact}(y_j,t)|}{\max_{0\leq j\leq N}|u_{exact}(y_j,t)|},
\]
respectively, where $\{y_j\}_{0\leq j\leq N}$ are the Hermite-Gauss points. For comparison purpose, the examples in section 4.1 and 4.2 are taken from \cite{FK}, \cite{GX} and \cite{MST}. The definitions of $E_N(t)$ and $E_{N,\infty}(t)$ are slightly different in various schemes. For example, the errors of scheme in \cite{FK} are in the weighted $L^2$-norm after the transformation, see definition on pp. 615, \cite{FK}; those of scheme in \cite{MST} are in the weighted $L^2$-norm, see details on pp. 71, \cite{MST}. 

\subsection{Heat equation with source term}

We consider \eqref{1D KdVB with source} with $a_1=a_3=0$, $a_2=1$, $g(u)=\frac u2$ and the following source term:
\[
	f(x,t)=[x\cos{x}+(t+1)\sin{x}](t+1)^{-\frac32}e^{-\frac{x^2}{4(t+1)}},
\]
which yields the heat equation.This equation with the initial condition $u(x,0)=\sin{(x)}e^{-\frac{x^2}4}$ has been investigated in \cite{FK}. And its exact solution is 
\[
	u_{exact}(x,t)=\frac{\sin{x}}{\sqrt{t+1}}e^{-\frac{x^2}{4(t+1)}}.
\]
It is easy to see from the exact solution that it concentrates near the origin, so the translating factor $\b$ can be chosen to be zero. Moreover, the choice of the scaling factor $\a$ is extremely important of improving the accuracy of our scheme \eqref{HSM formulation}. We refer the readers to \cite{LY} for the guidelines to pick appropriate scaling factor. In this example, we shall investigate our scheme with the time-invariant scaling factor $\a=\frac{\sqrt{2}}2$ and the time-dependent one $\a(t)=\frac1{\sqrt{2(t+1)}}$. The HSM with time-invariant scaling factor is essentially developed in \cite{XW,LY} and can be viewed as a special case of our THGSM. The accuracy of the numerical results obtained by our THGSM and the time-invariant scheme are compared with some previous results, including the ones in \cite{FK}, \cite{MST}. 

\begin{table}[!hbp]
	\begin{tabular}{|c|c|c|c|c|}
\hline
	Time step&Funaro and Kavian's&Ma, Sun and Tang's& HSM&our scheme \eqref{HSM formulation} \\
$dt$& scheme \cite{FK}&scheme \cite{MST} &  scheme \cite{XW,LY}& with $\a(t)=\frac1{\sqrt{2(t+1)}}$\\
&&with $(\delta_0,\delta)=(1,1)$  &with $\a=\frac{\sqrt{2}}2$&\\
\hline 
	$250^{-1}$&$2.478$E-$03$&$2.958$E-$04$&$9.0032$E-$07$&$2.4045$E-$07$\\
\hline
	$1000^{-1}$&$6.203$E-$04$&$1.189$E-$06$&$7.6286$E-$07$&$4.8534$E-$08$\\
\hline
	$4000^{-1}$&$1.550$E-$04$&$1.177$E-$06$&$7.6213$E-$07$&$4.6247$E-$08$\\
\hline
	$16000^{-1}$&$3.886$E-$05$&$1.177$E-$06$&$7.6212$E-$07$&$4.6238$E-$08$\\
\hline
\end{tabular}
\bigskip
	\caption{\small{Errors of the heat equation at $T=1$ with $N=20$ using different methods.}}\label{table-heat eqn}
\end{table}

The results are list in Table \ref{table-heat eqn}. It is clear to see that our scheme is superior to the schemes in \cite{FK} and \cite{MST} in accuracy. And it is as we expected that the time-dependent scaling factor yields better resolution than the constant one, i.e. the scheme in \cite{XW,LY}. From the viewpoint of computational complexity, our scheme can be implemented straight-forward without any variable transformation. And the matrix $A$ in \eqref{eqn-ODE} is tri-diagonal, symmetric, and diagonally dominant, which can be accurately and effectively inversed (part of the computation in Crank-Nicolson scheme).

To show the rate of convergence of \eqref{HSM formulation}, we list in Table \ref{table-heat eqn convergence rate} the numerical errors at $T=1$ with various time steps $dt$ and the truncation modes $N$. It confirms the theoretical prediction that the scheme \eqref{HSM formulation} is of second-order accuracy in time and spectral accuracy in space. 

\begin{table}[!htp]
	\begin{tabular}{|c|c|c|c|c|}
\hline	
	Time step $dt$&$N$&$E_N(1)$&$E_{N,\infty}(1)$&Order\\
\hline
	$1$E-$1$&\multirow{4}{*}{$40$}&$1.7439$E-$04$&$1.2773$E-$04$&\\
	$1$E-$2$&&$1.7473$E-$06$&$1.2757$E-$06$&$dt^{2.0005}$\\
	$1$E-$3$&&$1.7473$E-$08$&$1.2757$E-$08$&$dt^{2.0002}$\\
	$1$E-$4$&&$1.7478$E-$10$&$1.2759$E-$10$&$dt^{2.0001}$\\
\hline
\end{tabular}
\\[5pt]
\begin{tabular}{|c|c|c|c|c|}
\hline	
	Time step $dt$&$N$&$E_N(1)$&$E_{N,\infty}(1)$&Order\\
\hline
	\multirow{3}{*}{$1$E-$4$}&$8$&$7.4$E-$03$&$9$E-$03$&\\
	&$16$&$4.2446$E-$06$&$4.7275$E-$06$&$N^{-10.77}$\\
	&$32$&$1.6540$E-$10$&$1.3012$E-$10$&$N^{-12.71}$\\
\hline
	\end{tabular}
\bigskip
	\caption{\small{Error of the heat equation by using the proposed scheme \eqref{HSM formulation} with different time steps $dt$ and truncation mode $N$.}}\label{table-heat eqn convergence rate}
\end{table}

\subsection{Viscous Burger's equation}

Besides the linear problems, our scheme is also effective in nonlinear problems. Let us take the viscous Burgers' equation as an example. The inviscid Burgers' equation suffers from the Gibbs' phenomenon, due to the lack of the diffusion term, i.e. $a_2=0$. The vanishing viscosity method combined with the spectral method, and the post-pocessing of the numerical solution are expected, see discussion in \cite{Ta,Ta1990,L} and reference therein. The viscous Burgers' equation \eqref{1D KdVB with source} with $a_1=a_2=1$, $a_3=0$ and $g(u)=\frac u2$, has been studied in \cite{GX} by the variable transformation technique
 \[
	y=\frac x{2\sqrt{t+1}},\quad s=\ln{(t+1)}
\]
 that a soliton-like solution 
\[
	u_{exact}(x,t) = e^{-\frac{x^2}{4(1+t)}}\sech^2\left(\frac {ax}{2(1+t)}-b\ln{(1+t)}-c\right)
\]
is obtained with $a=0.3$, $b=0.5$ and $c=-3$, where the source term is 
\begin{align*}
	f(x,t) =& -e^{-\frac{x^2}{2(1+t)}}\frac{\sech^4(\xi)}{1+t}{\left[\frac x2+a\,\tanh{\xi}\right]}\\
	&+e^{-\frac{x^2}{4(1+t)}}\frac{\sech^2(\xi)}{1+t}\left[\frac{1+t+a^2}{2(1+t)}+2b\,\tanh(\xi)-\tanh^2(\xi)\frac{3a^2}{2(1+t)}\right],
\end{align*}
$\xi=\frac {ax}{2(1+t)}-b\ln{(1+t)}-c$.

In this example, the second term on the left-hand side of \eqref{HSM formulation} is computed numerically. By taking $\varphi=H_m^{\a,\b}$, $m=0,\cdots,N$, the key component of this term 
\[
	\left\langle \p_x\left(u_N^2\right),\varphi\right\rangle=-\left\langle u_N^2,\p_xH_m^{\a,\b}\right\rangle
	\overset{\eqref{derivative_x}}=-\a d(m+1)\left\langle u_N^2,H_{m+1}^{\a,\b}\right\rangle+\a d(m)\left\langle u_N^2,H_{m-1}^{\a,\b}\right\rangle.
\]
follows immediately with the numerical integral 
\[
	\int_{\R}H_l^{\a,\b}H_n^{\a,\b}H_m^{\a,\b},
\]
$l,n,m=0,\cdots,N$. We shall use the Gauss-Hermite quadrature rule to perform this integration.

In Table \ref{table-viscous Burger's eqn compared with other schemes}, we compare our time-dependent scheme with Guo-Xu's scheme in \cite{GX} and the time-dependent scheme in \cite{MST}. In this numerical experiment, we choose the same time step $dt=(e-1)\times10^{-3}$ and the total experimental time $T=e-1$ as in \cite{GX} and \cite{MST} for comparison. It is clear to see that our scheme is at least as good as the one in \cite{MST}, and is more accurate than the one in \cite{GX}. 

Moreover, in Table \ref{table-viscous Burger's eqn different dt} we compare our scheme with the one in \cite{MST} with different time steps $dt$. We choose the total experimental time $T=1$, and the truncation modes $N=40$ and $20$, for the scheme in \cite{MST} and our THGSM, respectively. It is shown that they are almost as the same accuracy. In this table, we have also verified the first-order accuracy in time, due to the Euler forward scheme used in the nonlinear term $B\left(\vec{\hat{u}}\right)$. In Table \ref{table-viscous Burger's eqn spectral accuracy}, we illustrate the spectral accuracy of our scheme in space as we showed in Theorem \ref{thm-convergence}.

\begin{table}[!htp]
	\begin{tabular}{|c|c|c|c|}
\hline
	$N$&Guo and Xu's result \cite{GX}&Ma, Sun and Tang's scheme \cite{MST}& our scheme \eqref{HSM formulation}\\
\hline
	$8$&$1.381$E-$06$&$1.563$E-$05$&$1.7669$E-$06$\\
\hline
	$16$&$1.381$E-$06$&$6.337$E-$07$&$1.1516$E-$07$\\
\hline
	$32$&$1.381$E-$06$&$1.031$E-$07$&$1.3400$E-$07$\\
\hline	
	\end{tabular}
\bigskip
	\caption{\small{Errors of viscous Burger's equation at $T=e-1$ with time step $dt = T*10^{-3}$. The error of scheme in \cite{GX} and that in \cite{MST} are $E_N(T)$ in their contexts, see details on pp. 869, \cite{GX} and  on pp. 71, \cite{MST}, respectively. The one in our scheme \eqref{HSM formulation} is $E_N(T)$. }}\label{table-viscous Burger's eqn compared with other schemes}
\end{table}
\begin{table}[!htp]
	\begin{tabular}{|c|c|c|c|c|c|c|}
\hline
		\multirow{2}{*}{Time step $dt$}&\multicolumn{3}{c|}{Ma, Sun and Tang's scheme \cite{MST}}&\multicolumn{3}{c|}{time-dependent HSM \eqref{HSM formulation}}\\
\cline{2-7}
&$E_{40}(1)$&$E_{40,\infty}(1)$&Order&$E_{20}(1)$&$E_{20,\infty}(1)$&Order\\
\hline
$1$E-$1$&$5.101$E-$04$&$4.677$E-$03$&&$4.8044$E-$06$&$1.4264$E-$04$&\\
$1$E-$2$&$4.508$E-$06$&$4.548$E-$05$&$dt^{2.05}$&$4.1512$E-$07$&$1.0680$E-$05$&$dt^{1.06}$\\
$1$E-$3$&$4.454$E-$08$&$4.530$E-$07$&$dt^{2.01}$&$4.1065$E-$08$&$1.0324$E-$06$&$dt^{1.03}$\\
$1$E-$4$&$4.467$E-$10$&$4.372$E-$09$&$dt^{2.00}$&$4.1771$E-$09$&$9.7699$E-$08$&$dt^{1.02}$\\
\hline
	\end{tabular}
\bigskip
\caption{\small{Errors of viscous Burger's equation by using scheme in \cite{MST} and our scheme \eqref{HSM formulation} with different time steps. The errors $E_{N}(t)$ and $E_{N,\infty}(t)$ in the scheme \cite{MST} and our scheme are defined slightly different.}}\label{table-viscous Burger's eqn different dt}
\end{table}
\begin{table}[!htp]
\begin{tabular}{|c|c|c|c|c|}
\hline	
	Time step $dt$&$N$&$E_N(1)$&$E_{N,\infty}(1)$&Order\\
\hline
	\multirow{3}{*}{$1$E-$5$}&$5$&$2.4254$E-$06$&$9.4692$E-$05$&\\
	&$15$&$1.1291$E-$08$&$4.5015$E-$07$&$N^{-4.88}$\\
	&$25$&$4.3637$E-$10$&$1.0527$E-$08$&$N^{-5.28}$\\
\hline
\end{tabular}
\bigskip
\caption{\small{Errors of viscous Burger's equation by using our scheme \eqref{HSM formulation} with different truncation modes $N$.}}\label{table-viscous Burger's eqn spectral accuracy}
\end{table}

\subsection{KdVB equation}

In this subsection, we shall reveal the effect of the time-dependent translating factor, which has been concealed in the theoretical proof of the convergence analysis. Let us consider the 1D KdVB equation \eqref{1D KdVB with source} with $a_1=a_2=1$, $a_3=-\frac1{16}$, $g(u)=\frac u2$ and the source term
\[
	f(x,t)=-8\sech^2{\xi}\left(2-3\sech^2{\xi}+2\tanh{\xi}\,\sech^2{\xi}\right   ),
\]
where $\xi=2(x+t)$. It is easy to verify that 
\[
	u_{exact}(x,t)=\sech^2(\xi)
\]
is the exact solution to the above KdVB equation. The Crank-Nicolson/Euler forward scheme is used as in the case of the viscous Burgers' equation. 

In Table \ref{table-kdVB eqn_time steps}, we list the errors of numerical solutions to the KdVB equation with various time steps. The total experimental time is $T=1$, and the truncation mode is $N=40$. The scaling factor is chosen to be a constant $\a=2\sqrt{2}$ and the translating factor is $\b=0$. It is shown that we can achieve almost the first-order accuracy in time. However, the approximate solution is less accurate than the one we obtained in the viscous Burgers' equation. The possible reason for this may be the drifting to the left of the exact solution.

To tackle this problem, we move our GHF accordingly by setting a time-dependent translating factor $\b=-t$. We compare the results with those obtained without translating factor, i.e. $\b=0$. In Table \ref{table-KdVB eqn convergence rate}, we display the errors of the scheme \eqref{HSM formulation} with and without translating factor. The total experimental time is $T=1$, and the scaling factor is chosen to be a constant $\a=2\sqrt{2}$. Various truncation modes $N=10,20,\cdots,50$ are numerically experimented. It is shown in Table \ref{table-KdVB eqn convergence rate} that the time-dependent translating factor yields better resolution. The spectral accuracy is verified in the scheme \eqref{HSM formulation} for the KdVB equation in both with/without translating factor cases.
\begin{table}[!htp]
	\begin{tabular}{|c|c|c|c|}
\hline	
	Time step $dt$&$E_{40}(1)$&$E_{40,\infty}(1)$&Order\\
\hline
	$1$E-$2$&$3.11$E-$02$&$1.6$E-$02$&\\
	$1$E-$3$&$4.4076$E-$04$&$2.83$E-$04$&$dt^{1.85}$\\
	$1$E-$4$&$3.4747$E-$04$&$2.9070$E-$04$&$dt^{0.98}$\\
\hline
\end{tabular}
\bigskip
	\caption{\small{Errors of KdVB equation with different time steps.}}\label{table-kdVB eqn_time steps}
\end{table}
\begin{table}[!htp]
\begin{tabular}{|c|c|c|c|c|c|c|c|}
\hline	
	\multirow{2}{*}{Time step $dt$}&\multirow{2}{*}{$N$}&\multicolumn{3}{c|}{No translating factor $\beta=0$}&\multicolumn{3}{c|}{Time-dependent translating factor $\beta=-t$}\\
\cline{3-8}
&&$E_N(1)$&$E_{N,\infty}(1)$&Order&$E_N(1)$&$E_{N,\infty}(1)$&Order\\
\hline
	\multirow{5}{*}{$1$E-$4$}&$10$&$9.35$E-$02$&$7.54$E-$02$&&$5.7$E-$03$&$2.6$E-$03$&\\
	&$20$&$9.6$E-$03$&$8.2$E-$03$&$N^{-3.28}$&$2.8944$E-$04$&$1.2570$E-$04$&\\
	&$30$&$1.6$E-$03$&$1.4$E-$03$&$N^{-3.66}$&$6.8894$E-$05$&$2.5007$E-$05$&$N^{-4.05}$\\
	&$40$&$3.4747$E-$04$&$2.9070$E-$04$&$N^{-4.17}$&$1.3225$E-$05$&$4.5626$E-$06$&$N^{-4.27}$\\
	&$50$&$9.0630$E-$05$&$7.1222$E-$05$&$N^{-4.36}$&$3.5135$E-$06$&$1.1749$E-$06$&$N^{-4.49}$\\
\hline
	\end{tabular}
\bigskip
	\caption{\small{Errors of the KdVB equation by using the proposed scheme \eqref{HSM formulation} without translating factor and with time-dependent translating factor $\b=-t$. Different truncation modes $N$ are numerically experimented.}}\label{table-KdVB eqn convergence rate}
\end{table}

\section{Conclusions}

In this paper, we investigate a time-dependent Hermite-Galerkin spectral method (THGSM) to solve the nonlinear convection-diffusion equations in the whole line. Many important physical model equations are within the framework in this paper. Our THGSM is formulated under the basis of the generalized Hermite functions (GHF), which includes the time-dependent scaling factor and translating factor in the definition. The advantages of this formulation are at least two folds: on the one hand, the proof of the stability and convergence analysis are much easier than the previous Hermite spectral method, say in \cite{FK} and \cite{MST}, where they analyze in the weighted $L^2$ space. On the other hand, the implementation of our method is straight-forward. No variable transformation techinique is required. Furthermore, the derived ODE system in the linear equation is with extremely low computational cost, since the stiffness matrix is symmetric. The numerical experiments are carried out in some benchmark examples, including the heat equation, the viscous Burgers' equation and the KdVB equation. It is clear to see that our method surpasses nearly all the existing methods in accuracy.

\end{document}